\newtheorem{theorem}{Theorem}[section]
\newtheorem{lemma}[theorem]{Lemma}
\newtheorem{proposition}[theorem]{Proposition}
\newtheorem{definition}[theorem]{Definition}
\newcommand{\qed}{\rule{1mm}{3mm}}     
\newcommand{\N}{{\mathbb N}}
\newenvironment{proof}{\vspace*{\parsep}\noindent {\bf Proof:}}{\qed\\[1em]}
\begin{document}

\title {On the Failure of BD-${\mathbb N}$ and BD, and an
Application to the Anti-Specker Property}

\author{Robert S. Lubarsky \\ Dept. of Mathematical Sciences
\\ Florida Atlantic University \\ Boca Raton, FL 33431 \\
Robert.Lubarsky@alum.mit.edu} \maketitle
\begin{abstract}
We give the natural topological model for $\neg$BD-${\mathbb N}$,
and use it to show that the closure of spaces with the
anti-Specker property under product does not imply BD-${\mathbb
N}$. Also, the natural topological model for $\neg$BD is
presented. Finally, for some of the realizability models known
indirectly to falsify BD-$\mathbb{N}$, it is brought out in detail
how BD-$\mathbb N$ fails.
\\ {\bf keywords:} anti-Specker, BD, BD-$\mathbb{N}$, realizability,
topological models\\{\bf AMS 2010 MSC:} 03F60, 26E40
\end{abstract}

\section{Introduction}
In recent years much attention has been paid to subtle
foundational principles of constructive analysis. (For background
in constructive analysis, see \cite {B,BB}.) These are principles
that hold in the major traditions of mathematics, such as
Brouwer's intuitionism, Russian constructivism, and classical
mathematics, yet do not follow from ZF-style axioms on the basis
of constructive logic, such as IZF. The principles of this
character identified so far are Weak Markov's Principle \cite{BR,
I91, TvD}, a version of Baire's Theorem \cite{IS}, and, most
importantly for our purposes, the boundedness principles BD and
BD-$\mathbb{N}$.

By way of explaining these boundedness principles, a sequence
($a_n$) of natural numbers is {\bf pseudo-bounded} if $\lim_{n
\rightarrow \infty} a_n/n = 0$. A set of natural numbers is {\bf
pseudo-bounded} if every sequence of its members is
pseudo-bounded. Examples are bounded sets. Equivalently, as
observed in \cite {IY}, a set is pseudo-bounded if every sequence
$(a_n)$ of its members is eventually bounded by the identity
function: for $n$ large enough, $a_n < n$ (or $a_n \leq n$). (To
see this equivalence, consider large enough intervals within
$(a_n)$.) This latter formulation is often easier to work with.

BD is the assertion that every pseudo-bounded set of natural
numbers is bounded. BD-$\mathbb{N}$ is that every countable
pseudo-bounded set of naturals is bounded. They were first
identified during Ishihara's analysis of continuity \cite {I92},
as they are equivalent to sequentially continuous functions on
certain metric spaces being $\epsilon-\delta$ continuous. Since
their identification, they have become central tools in the
foundations of constructive analysis, especially the latter \cite
{Br09, Br0, BISV, I01, IY}.

It can be hard to imagine how BD or BD-$\mathbb{N}$ could fail,
which is likely a cause or effect of their being true in most
systems. That notwithstanding, in order to understand them better,
it is useful to see when they are false. Trivially BD implies
BD-$\mathbb N$, so when discussing their failure we will usually
restrict attention to the weaker of the two, the one more
difficult to falsify, BD-$\mathbb N$. It turns out that the first
models falsifying it did so unwittingly. BD-$\mathbb N$ is new,
and continuity is old. So models violating commonly accepted
continuity principles were developed long before BD-$\mathbb N$
was even identified. It was only later that people looked back
and realized that the only way that those continuity properties
could fail was through the failure of BD-$\mathbb N$.

There are several shortcomings to this state of affairs, which the
current work is intended to address. One is that these first
models seem somewhat ad hoc for this purpose, falsifying
BD-$\mathbb N$ almost by accident. In contrast, the topological
models presented here seem to be the natural models. (For
discussion about naturality, see also the questions at the end.
For background on topological models, see \cite {G1,G2}.) That is,
to violate BD-$\mathbb N$, you'd need a sequence which is sort-of
bounded while also sort-of unbounded. Without thinking much about
it, you might well guess that either a generic bounded sequence or
a generic unbounded sequence would do the trick. This turns out to
be exactly right, as we will see. Similarly, to violate BD, you'd
need a set which is simultaneously bounded and unbounded, after a
fashion. The first guess is again a generic set, either bounded or
unbounded; again, this does it.

A second shortcoming of the prior state of knowledge is that the
way we know BD-$\mathbb N$ to fail in these first models is
indirect: BD-$\mathbb N$ plus other foundational axioms imply some
continuity principle; said continuity principle fails; check that
the other axioms hold; hence BD-$\mathbb N$ fails. We are left
with the unsatisfying feeling of not really knowing just why
BD-$\mathbb N$ fails. What is the pseudo-bounded sequence which is
not bounded? Or is there something else going on? This is also
addressed later, when for some of these models the indirect
argument above is unraveled to reveal just how BD-$\mathbb N$
fails.

Finally, the first proofs of the independence of BD-$\mathbb N$
were in Lietz's thesis \cite {L}, which also contains the bulk of
the known models violating BD-$\mathbb N$. Although these models
are all ultimately realizability models, the presentation is
category-theoretic through and through, and so difficult or even
inaccessible for non-category theorists to understand. The
presentation of these models given here is purely in terms of
realizability for arithmetic and analysis.

Returning to the alleged naturality of the topological models, a
good test for naturality is whether the model, while violating BD
or BD-$\mathbb N$, violates as little else as possible. That is,
it should prove independence results around BD or BD-$\mathbb{N}$.
For instance, Doug Bridges \cite{Br} has shown that, under
BD-$\mathbb{N}$, anti-Specker spaces (see below) are closed under
products, and asked whether the converse holds. If there is a
canonical model, whatever that might mean, falsifying
BD-$\mathbb{N}$, that would be the first place to look for this
question. Such a model would be the gentlest possible extension of
a classical model making BD-$\mathbb{N}$ false. If the failure of
BD-$\mathbb{N}$ did not imply that anti-Specker spaces are so
closed, then in the canonical model the anti-Specker spaces would
retain this closure. This is in fact just what is shown in section
3, that in the first model of section 2 the anti-Specker spaces
are closed under Cartesian product. Hence such closure does not
imply BD-$\mathbb{N}$.

We hope that the models presented here will make investigations
around BD and BD-$\mathbb{N}$ easier, as well as promote interest
in topological models more generally.

A word about the meta-theory is in order. This article, like
almost all in mathematics, is intended to be within classical
mathematics. This would be unremarkable, except that it is at the
same time an article about constructive mathematics. Just as we
are more upset when religious leaders engage in even common sexual
transgressions than when lay people do, because we hold them to a
higher standard, so are people who discuss constructive
mathematics questioned about their use of Excluded Middle when
almost everyone else does so without even noticing it. Given the
likely readership, though, it would be a service to the reader to
bring out the not fully constructive principles in this work when
they are used. This is done, for instance, in the proof of \ref
{main lemma}, which is phrased as a proof by contradiction, and in
reality uses only the Fan Theorem, as is observed there. The
bottom line caveat to the reader, though, is that this work has
not been vetted to be purely constructive, and may contain
unnoticed applications of Excluded Middle.

\section {Topological models}

As discussed above, we are looking for a sequence (or, in the case
of BD, set) which is bounded in a way, yet also unbounded in
another way. The obvious guess is to take either a generic bounded
or a generic unbounded sequence. That is, the topological space
would be the space of bounded sequences, or unbounded sequences.
The topological model over a space introduces a generic elements
of that space, so in this case we'd have a generic bounded, or
unbounded, sequence. Not surprisingly, this works for both
BD-$\mathbb N$ and BD.

\subsection{The natural model for $\neg$BD-$\mathbb N$} Let the
points in the topological space $T$ be the functions $f$ from
$\omega$ to $\omega$ with finite range, that is, enumerations of
non-empty finite sets. A basic open set $p$ is (either $\emptyset$
or) given by an unbounded sequence $g_p$ of integers, with a
designated integer ${\rm stem}(p)$, beyond which $g_p$ is
non-decreasing. Furthermore, $g_p({\rm stem} (p)) \geq
\max\{g_p(i) \mid i < {\rm stem}(p)\}$. A function $f$ is in $p$
if $f(n) = g_p(n)$ for $n < {\rm stem}(p)$ and $f(n) \leq g_p(n)$
otherwise. \footnote {The referee has observed that this topology
is the join of two other topologies, namely the product topology
on Baire space as a product of copies of $\mathbb N$, and the
``lower" topology for non-decreasing maps, with basic open sets
$\{f \in T \mid \forall n f(n) \leq g(n)\}$ for $g$
non-decreasing. They pointed out that $T$ is therefore
zero-dimensional and Hausdorff, and hence sober.} Notice that $p
\cap q$ is either empty (if $g_p$ and $g_q$ through their stems
are incompatible) or is given by taking the larger of the two
stems, the function up to that stem from the condition with the
larger stem, and the pointwise minimum beyond that. Hence these
open sets do form a basis.

Let $M$ be the full topological model built over $T$. For those
with less experience with such models, perhaps the easiest way to
think of this is that we are forcing over the partial order of
open subsets of $T$ without modding out by double negation. (The
following discussion applies to any topological space $T$.) Sets
in $M$ are named by sets in the ground model $V$. Inductively in
$V$, $M_0 = \emptyset, M_\lambda = \bigcup_{\alpha < \lambda}
M_\alpha,$ and $M_{\alpha + 1}$ is the power set of $M_{\alpha}
\times O(T)$, where $O(T)$ is the set of open subsets of $T$.
Clearly all the action is happening at the successor stage. A
member $\sigma$ of $M_{\alpha + 1}$, called a term, is of the form
$\{ \langle \sigma_j, r_j \rangle \mid j \in J \}$, where
$\sigma_j$ is (inductively) a term, $r_j$ an open set, and $J$ an
index set. Thinking of $r_j$ as a forcing condition, $r_j$ forces
$\sigma_j$ to be a member of $\sigma$; alternatively, $r_j$ is the
truth value of the sentence $``\sigma_j \in \sigma"$. Just as with
classical forcing, the ground model embeds canonically into $M$:
${\hat x} = \{ \langle {\hat y}, T \rangle \mid y \in x \}$
provides a canonical $M$-name for each $x \in V$. Where this
differs from classical forcing is that classically for a sentence
$\phi$ to be true in the generic extension its truth value must be
a dense open subset of $T$, whereas here the truth value would
have to be all of $T$. Hence it might be misleading to use the
word ``generic" in this context, since density plays little of a
role here. Still, we are going to do so, because we have no better
name for what is in forcing called ``the generic $G$", given by
the term $\{ \langle {\hat r}, r \rangle \mid r \in O(T) \}$. To
try to convey some intuition, $G$ looks like a generic point of
$T$, which is in whatever open set you happen to be looking at at
the moment. (For more background on topological models, see \cite
{G1,G2}. What is there called the topological model is here
described as the full model, to distinguish it from other possible
models. The fullness consists of it containing all possible terms.
The discussion in the addendum to \cite {RSL10} might also be of
use here.)

Particularizing to the space $T$ at hand, $G$ is forced to be a
function with domain the natural numbers, themselves the canonical
image of the ground model natural numbers, and can be
characterized by the relation $p \Vdash ``G(n) = x"$ iff $n < {\rm
stem}(p)$ and $g_p(n) = x$. Our interest is really in the range of
$G$. The reason we force to get $G$ as a function, and not just
the range, is that BD-$\mathbb N$ refers to countable sets, that
is, the ranges of functions with domain the natural numbers. We
would have to do something at some point to make the set being
built countable, so it just seemed easier to take care of this up
front. The primary result of this note is

\begin{theorem} $T \Vdash rng(G)$ is countable and pseudo-bounded,
yet not bounded. Also, $T \Vdash$ Dependent Choice (DC).
\end{theorem}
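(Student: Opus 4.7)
The plan is to handle the four assertions separately; the substantive one is pseudo-boundedness. Countability is immediate from the characterization $p \Vdash G(n) = x$ iff $n < \text{stem}(p)$ and $g_p(n) = x$, which forces $G$ to be a function $\omega \to \omega$ enumerating $rng(G)$ in the model.

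For the failure of boundedness, I would show $\llbracket \exists N\, \forall k\, G(k) \leq N \rrbracket = \emptyset$: for each $N$, the truth value $\llbracket \forall k\, G(k) \leq N \rrbracket$ is the interior of $\{f : \max f \leq N\}$, which is empty because any basic open $p$ contains a point $f$ with $\max f > N$ (use that $g_p$ is unbounded: at some $k \geq \text{stem}(p)$ with $g_p(k) > N$, set $f(k) = g_p(k)$, set $f$ to $0$ at other tail indices to preserve finite range, and take $f = g_p$ on the stem). DC in the model follows by the standard lift of ground-model DC to names, applying DC in the ground model to the forcing relation to build an $\omega$-sequence of terms witnessing the required relation.

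For pseudo-boundedness, given a name $\dot{a}$ for a sequence in $rng(G)$, I work with the equivalent formulation ``eventually dominated by the identity'' and aim for $T \Vdash \exists n_0\, \forall n \geq n_0\, \dot{a}(n) \leq n$. Pointwise this is easy: at each $f \in T$, the interpretation $\dot{a}^f$ takes values in the finite set $rng(f)$, hence is bounded by $\max f$ and is eventually $\leq$ identity from $n_0 := \max f + 1$. The task is to promote this pointwise fact to a topological covering. Given $f_0 \in T$ with $M_0 := \max f_0$, I would construct a basic open $p \ni f_0$ with $g_p \upharpoonright s_p = f_0 \upharpoonright s_p$ (so the initial segment contributes only values $\leq M_0$) and with $g_p$ past $s_p$ chosen to be a non-decreasing, unbounded but slowly growing sequence tuned to the specific name $\dot{a}$. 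The intent is that for $f' \in p$, any value $\dot{a}(n)^{f'} = f'(m)$ is either $f_0(m) \leq M_0 \leq n$ (when the witness $m < s_p$) or bounded by $g_p(m)$ (when $m \geq s_p$), which the slow growth of $g_p$ keeps at most $n$ at the indices that actually arise.

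The main obstacle lies precisely here. The pointwise bound on $\max f'$ is not uniform on any basic open, since $g_p$ unbounded implies that $\max f'$ is unbounded over $f' \in p$, so one cannot take $n_0$ directly from a pointwise computation. The witness $m$ for $\dot{a}(n) = G(m)$ can depend continuously but nontrivially on $f'$, and arranging a single slow $g_p$ that dominates $g_p(m) \leq n$ at every relevant $(f', n)$ requires exploiting local constancy of $\dot{a}$ together with compactness of the bounded level sets $K_N := \{f \in p : \max f \leq N\}$, on which $\Phi(f)(n) \leq N \leq n$ for $n \geq N$. I expect the argument to be phrased classically as a proof by contradiction, using the Fan Theorem on these compact subspaces to extract a uniform $n_0$, in keeping with the author's remark that the main lemma's proof is phrased contrapositively and in reality uses only the Fan Theorem.
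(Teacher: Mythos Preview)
Your outline for countability and non-boundedness matches the paper and is correct. The two substantive parts, pseudo-boundedness and DC, both have gaps.

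For DC: it is \emph{not} a ``standard lift of ground-model DC to names.'' In a topological model, each instance of $\exists y\,\phi(x,y)$ is witnessed only locally, so building an $\omega$-sequence of witnesses requires shrinking the open set $\omega$ many times, and the intersection of a descending chain of basic opens need not be open here (the stems might march to infinity, or the bounding functions $g$ might have bounded pointwise infimum). The paper handles DC with a fusion argument exactly parallel to the pseudo-boundedness proof: a lemma that from $p \Vdash \exists y\,\psi(y)$ and $I \leq g_p({\rm stem}(p))$ one can pass to $q \subseteq p$ with the \emph{same stem} and $g_q({\rm stem}(q)) \geq I$ forcing $\psi(\sigma)$ for a definite term $\sigma$. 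The preserved lower bound at the stem, increased at successive stages, is what guarantees $\bigcap_n p_n$ remains open.

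For pseudo-boundedness: your diagnosis of the obstacle is right, and the ingredients (compactness of the $I$-bounded level sets, the Fan Theorem) are the correct ones, but you have not isolated the lemma that makes the ``slowly growing $g_p$'' construction go through. The paper's key step is: if $p \Vdash ``t \in rng(G)"$ and $\max_{n<{\rm stem}(p)} g_p(n) \leq I \leq g_p({\rm stem}(p))$, then there is $q \subseteq p$ with the \emph{same stem} and $g_q({\rm stem}(q)) \geq I$ forcing $t \leq \hat I$. The Fan Theorem enters in proving \emph{this} lemma (the relevant tree is $(I{+}1)$-branching), not in extracting a single uniform $n_0$ over a compact level set $K_N$ as you propose. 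Once this lemma (and its mild generalization allowing the preserved value to sit at some index $M \geq {\rm stem}(p)$) is in hand, one iterates: given $f \in p$ with $N = \sup rng(f)$, apply the lemma with $I = N, N{+}1, N{+}2, \ldots$ to produce $q_N \supseteq q_{N+1} \supseteq \cdots$, each $q_k$ forcing $a_k \leq k$ while preserving some value $\geq k$ in its bounding function; the intersection is then open, contains $f$, and forces $a_n \leq n$ for all $n \geq N$. Your ``slow growth tuned to $\dot a$'' is exactly the \emph{outcome} of this fusion, but the preservation clause in the lemma is the mechanism you are missing, and without it there is no reason the successive shrinkings leave you with an open neighborhood of $f_0$.
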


A major reason we're interested in showing that the model
validates DC is that it makes the failure of BD-$\mathbb N$ that
much more striking. One might consider arguing for BD-$\mathbb N$
by, given $f:\mathbb N \rightarrow \mathbb N$, trying to build a
sequence with $a_0 = f(0)$ and $a_{n+1} = \min \{x \mid x \in
rng(f), x > a_n\}$; if this construction succeeds, then $f$ is not
pseudo-bounded, and if it doesn't then $f$ looks to be bounded.
This will not work, if for no other reason than that this minimum
might not exist. With DC, that last objection loses its validity:
as long as $\{x \mid x \in rng(f), x
> a_n\}$ is inhabited, a value could be chosen for $a_{n+1}$, and
with DC these values could be strung together into a total
sequence. Hence DC makes it even harder for BD-$\mathbb N$ to
fail. Another reason to want DC is that it is useful for
developing analysis. So this model then provides a nice testing
ground or example for the development of analysis in the absence
of BD-$\mathbb N$.

\begin {proof}
It is immediate that $rng(G)$ is countable, as $G$ is total: given
$n$, $\{p \mid {\rm stem}(g_p) > n\}$ covers $T$, and each such
$p$ forces $n \in dom(G)$. It is almost as immediate that $rng(G)$
is not bounded: given a potential bound $B \in \mathbb{N}$, since
$g_p$ is unbounded, $p$ can be extended (using standard
terminology from forcing here, the meaning being ``shrunk" as a
set) to an open set forcing $B \in rng(G)$; hence no $p$ can force
that any particular natural is a bound, i.e. nothing forces that
$rng(G)$ is bounded, so $T$ forces that $rng(G)$ is unbounded.

The work is in showing pseudo-boundedness. The primary lemma for
that is:

\begin{lemma} \label {main lemma}
Let $p$ be an open set forcing $``t \in rng(G)"$, and $I$ an
integer such that $\max_{n<{\rm stem}(p)} g_p(n) \leq I \leq
g_p({\rm stem}(p)).$ Then there is a q extending p with the same
stem and $g_q({\rm stem}(q)) \geq I$ forcing $``t \leq {\hat I}"$.
\end{lemma}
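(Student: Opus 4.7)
The plan is to combine the hypothesis $p \Vdash t \in rng(G)$ with a compactness argument---the Fan Theorem constructively, or K\"onig's Lemma classically---in order to exhibit the required $q$. Let $K \subseteq p$ be the set of $f$ with $f(n) \leq I$ for every $n \geq {\rm stem}(p)$. Since $I \geq \max_{n < {\rm stem}(p)} g_p(n)$, every $f \in K$ in fact has $f(n) \leq I$ globally, so $K$ is (essentially) a product of the finite sets $\{0,\ldots,I\}$, hence compact; this is the point at which the Fan Theorem will enter.

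Unpacking $p \Vdash t \in rng(G)$ topologically gives $p \subseteq \bigcup_{n,k} [[t = k]] \cap [[G(n) = k]]$. So for each $f \in K$ there is a basic open $q_f \ni f$ contained in some $[[t = k_f]] \cap [[G(n_f) = k_f]]$; since $q_f \Vdash G(n_f) = k_f$ and $f \in q_f$, we have $f(n_f) = k_f$, and since $f(n_f) \leq I$ whether $n_f$ lies before or after ${\rm stem}(p)$, $k_f \leq I$. Hence $q_f \Vdash t \leq I$. By compactness of $K$, finitely many $q_{f_1}, \ldots, q_{f_m}$ suffice to cover $K$; let $U$ be their union, so $U$ is open, contains $K$, and forces $t \leq I$.

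To extract a $q$ inside $U$ of the prescribed shape, for each $M \geq {\rm stem}(p)$ define the basic open $q_M$ by: ${\rm stem}(q_M) = {\rm stem}(p)$; $g_{q_M}$ agrees with $g_p$ for $n < {\rm stem}(p)$ and for $n > M$; and $g_{q_M}(n) = I$ for ${\rm stem}(p) \leq n \leq M$. This is a valid basic open (non-decreasing past the stem because $g_p(M+1) \geq I$), extends $p$, and has the prescribed shape with $g_{q_M}({\rm stem}(p)) = I$; moreover the $q_M$'s shrink to $K = \bigcap_M q_M$. The key claim is that $q_M \subseteq U$ for $M$ large enough, after which $q = q_M$ is the desired open set. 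Classically this follows by a convergent-subsequence argument applied to counterexamples $f_M \in q_M \setminus U$ inside the Tychonoff-compact $\prod_n \{0, \ldots, g_p(n)\}$; constructively it follows from the Fan Theorem applied to the bar ``the cylinder of $\sigma$ is contained in $U$'' on the finitely-branching tree of initial segments bounded by $g_p$.

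The main obstacle is this last compactness step: it is the one essentially non-constructive piece of the argument, and precisely where, as the author flags in the meta-theoretic remarks, only the Fan Theorem is really needed; everything else is a routine unwinding of the topological forcing semantics.
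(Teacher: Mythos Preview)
Your argument is sound through the construction of $U$: the set $K$ of $I$-bounded members of $p$ is indeed compact in the subspace topology (which coincides with the product topology there), and the cover by neighborhoods forcing $t\le\hat I$ is exactly right. The gap is in the extraction step: it is \emph{not} true that $q_M\subseteq U$ for large $M$, and both the convergent-subsequence argument and the Fan-Theorem argument you sketch for it fail.

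The subsequence argument conflates two topologies. You take limits in the product topology on $\prod_n\{0,\dots,g_p(n)\}$, but a basic open of $T$ is a countable intersection of product-clopen sets and hence product-\emph{closed}; thus $U$ is closed, not open, in the product topology, and from $f_{M_j}\notin U$, $f_{M_j}\to f^*$ you cannot conclude $f^*\notin U$. Concretely: take ${\rm stem}(p)=0$, $g_p(n)=n+10$, $I=5$, and let $U$ be the single basic open with stem $0$ and bounding function $g_U(n)=\max(5,n)$. Then $K\subseteq U$, yet for every $M$ the function $f_M$ with $f_M(M{+}1)=M{+}11$ and $f_M\equiv 0$ elsewhere lies in $q_M\setminus U$. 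Your Fan-Theorem version has the same defect: the tree ``bounded by $g_p$'' is not finitely branching, and if you switch to the $I$-bounded tree the resulting uniform bar only controls cylinders inside $K$, not the points of $q_M$ whose tails exceed $I$.

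The trouble is that $g_{q_M}$ reverts to $g_p$ past $M$, which can dominate every $g_{q_{f_i}}$. What actually works---and what the paper does, phrased recursively---is to build $g_q$ past the stem as a pointwise \emph{minimum} of bounding functions coming from the finitely many pieces of the cover, so that the tail of $q$ is controlled by the $q_{f_i}$'s rather than by $p$. Your compactness front end can be salvaged this way, but the clean $q_M$ device cannot.
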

\begin{proof}
If $r$ is an open set, for $i \leq g_r({\rm stem}(r))$, let $r_i
\subseteq r$ be such that ${\rm stem}(r_i) = {\rm stem}(r) + 1$,
$g_{r_i}({\rm stem}(r)) = i$, and for $n \not = {\rm stem}(r), \;
g_{r_i}(n) = g_r(n)$. Notice that $\bigcup_i r_i = r$.

Fix $I$. Say that $p' \subseteq p$ is a \underline{candidate} if
$\max_{n<{\rm stem}(p')} g_{p'}(n) \leq I \leq g_{p'}({\rm
stem}(p')).$ If $p'$ is a candidate, say that $q' \subseteq p'$ is
\underline{good} if $q'$ satisfies the conclusion of the lemma as
applied to $p'$. We want to show that $p$ itself has a good
extension. Notice that if $i \leq I$ and $p'$ is a candidate, so
is $p'_i$.

Suppose that each $p_i \; (i \leq I)$ had a good extension, say
$q^i$. Then so would $p$, as follows. Let ${\rm stem}(q) = {\rm
stem}(p)$. For $n < {\rm stem}(q)$, let $g_q(n)$ be the common
value $g_p(n)$; let $g_q({\rm stem}(q))$ be $I$; for all other
$n$, let $g_q(n)$ be $\min_i g_{q^i}(n)$. As described above, $q$
is covered by the $q_i$s, and each $q_i$ is a subset of $q^i$, so
$q \Vdash ``t \leq {\hat I}"$.

This means that if $p^0 := p$ does not have a good extension,
neither does some $p^1 := p_i$. Continuing inductively, let
$p^{n+1}$ be $p^n_i$, where $i \leq I$ and $p^n_i$ does not have a
good extension. The initial parts of the $p^n$s cohere to form a
function $f \in p$ with range (a subset of) \{0, 1, ... , $I$\}.
Let $r \subseteq p$ be a neighborhood of $f$ forcing a particular
value $J$ for $t$. We have that $J \leq I$, as follows. Since $t$
is forced to be in the range of $G$, extend $r$ to a neighborhood
$s$ of $f$ forcing $t = G(m)$ for some natural number $m$. Without
loss of generality, ${\rm stem}(s)$ can be taken to be larger than
$m$. By the choice of $f$, $G(m) \leq I$, as claimed.

Eventually the values of $g_s$ are all $\geq I$, so, if need be,
shrink $s$ by extending the initial part consistently with $f$
until $g_s({\rm stem}(s)) \geq I$. This is a good extension of
some $p^n$, which is a contradiction.

Alternatively, this argument could be done on the basis of the Fan
Theorem instead of a proof by contradiction, since we are looking
at trees with $I$-much branching.
\end{proof}

In the end, we will need a more general version of the preceding.

\begin{lemma}
Let $p$ be an open set forcing $``t \in rng(G)"$, and $M \geq {\rm
stem}(p), I$ an integer such that $\max_{n<M} g_p(n) \leq I \leq
g_p(M).$ Then there is a q extending p such that for $n < M \;
g_q(n) = g_p(n), {\rm stem}(q) = {\rm stem}(p),$ and $g_q(M) \geq
I$, forcing $``t \leq {\hat I}"$.
\end{lemma}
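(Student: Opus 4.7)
The plan is to reduce this to the preceding lemma by induction on $M - {\rm stem}(p) \geq 0$. The base case $M = {\rm stem}(p)$ is exactly that lemma (the conditions $g_q(n) = g_p(n)$ for $n < M$ are vacuously part of ``same stem''), so all the work is in the inductive step. Let $S = {\rm stem}(p)$ and suppose $M > S$.

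For the inductive step I would use the same $r \mapsto r_i$ decomposition from the previous proof, so that $p = \bigcup_{i \leq g_p(S)} p_i$ with ${\rm stem}(p_i) = S+1$ and $g_{p_i}(n) = g_p(n)$ for $n \neq S$, $g_{p_i}(S) = i$. The key observation is that for each such $i$, the hypothesis of the present lemma is still satisfied by $p_i$ with the same $M$ and $I$: indeed $i \leq g_p(S) \leq \max_{n < M} g_p(n) \leq I$, so $\max_{n < M} g_{p_i}(n) \leq I$, while $g_{p_i}(M) = g_p(M) \geq I$. Moreover $M - {\rm stem}(p_i) = M - S - 1$, so the inductive hypothesis applies and yields $q^i \subseteq p_i$ with ${\rm stem}(q^i) = S+1$, $g_{q^i}(n) = g_{p_i}(n)$ for $n < M$, $g_{q^i}(M) \geq I$, and $q^i \Vdash ``t \leq \hat I"$.

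Now I would glue the $q^i$ into a single $q$ exactly as in the previous proof. Set ${\rm stem}(q) = S$; for $n < S$ take $g_q(n) = g_p(n)$; for $S \leq n < M$ take $g_q(n) = g_p(n)$ (which is forced by the desired conclusion); take $g_q(M) = I$ (or $\min_i g_{q^i}(M)$, which is $\geq I$); and for $n > M$ take $g_q(n) = \min_i g_{q^i}(n)$. A direct check shows $q$ is a valid basic open set: the sequence is non-decreasing from $S$ onward (the block $[S, M)$ matches $g_p$, the jump at $M$ is upward since all earlier values are $\leq I \leq g_q(M)$, and the min of non-decreasing unbounded sequences beyond $M$ is again non-decreasing and unbounded), and $g_q(S) = g_p(S) \geq \max_{n < S} g_q(n)$.

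Finally, the decomposition $q = \bigcup_{i \leq g_p(S)} q_i$ from the earlier notation has each $q_i \subseteq q^i$: the stem values match (at $n < S$ both equal $g_p(n)$, at $n = S$ both equal $i$), and for $n > S$ we have $g_{q_i}(n) = g_q(n) \leq g_{q^i}(n)$ either by direct equality for $S < n \leq M$ or by the minimum-definition for $n > M$. Hence each $q_i$ forces $t \leq \hat I$, and since the $q_i$ cover $q$, so does $q$. The main obstacle, such as it is, lies only in the bookkeeping verifying that $q$ is a legitimate basic open set and that the coverage $q_i \subseteq q^i$ transfers the forced inequality upward; there is no new combinatorial difficulty beyond that already handled in the proof of the previous lemma.
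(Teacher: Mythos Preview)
Your proof is correct and takes essentially the same approach as the paper: both use the $r \mapsto r_i$ decomposition to push the stem up to $M$ and then invoke the previous lemma, with the paper unrolling the cover all at once (iterating the decomposition $M - {\rm stem}(p)$ times to get pieces with stem $M$, applying Lemma~\ref{main lemma} to each, then gluing) while you organize the same computation as an induction on $M - {\rm stem}(p)$. One tiny slip: in your final coverage check the case $n = M$ is not ``direct equality'' but rather $g_{q_i}(M) = I \leq g_{q^i}(M)$; the needed inequality still holds, so nothing is lost.
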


Notice that the previous lemma is the special case of the current
one in which $M = {\rm stem}(p)$.

\begin{proof}
Using the notation from the beginning of the last proof, $\{ p_i
\mid i \leq g_p({\rm stem}(p)) \}$ is an open cover over $p$.
Similarly, $\{ (p_i)_j \mid j \leq g_p({\rm stem}(p)+1) \}$ is an
open cover of $p_i$, so that the collection of $(p_i)_j$s as both
$i$ and $j$ vary is an open cover of $p$. Continuing this
procedure for $M - {\rm stem}(p)$ many steps, we have an open
cover $q_k$ ($k<K$) of $p$ such that ${\rm stem}(q_k) = M$.
Applying the previous lemma to each $q_k$ produces a collection
$q_k'$ such that each one forces $``t \leq {\hat I}"$. The union
$\bigcup_k q_k'$ can be restricted to a basic open set $q$ as
follows: ${\rm stem}(q) = {\rm stem}(p), g_q(n) = g_p(n)$ for
$n<M$, and for larger $n \; g_q(n) = \min_k(q_k'(n))$. $q$ is as
desired.
\end{proof}

The benefit of this lemma is that it enable one to do fusion
arguments, like in Axiom A forcing \cite {Ba} and in arguments to
get minimal degrees in computability theory, to get the
pseudo-boundedness of G, as well as DC.

\begin{lemma} $T \Vdash rng(G)$ is pseudo-bounded.
\end{lemma}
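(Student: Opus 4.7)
The plan is to show that every basic open $p$ forcing $(\dot{a}_n)$ to be a sequence in $rng(G)^{\mathbb N}$ can be covered by basic opens $q$ each supplying an explicit $N_q$ with $q \Vdash \forall n \geq \hat N_q\, (\dot{a}_n \leq n)$, from which $T \Vdash$ pseudo-boundedness follows (since $T$ is itself an open set). The construction of such a $q$ is a fusion that uses the preceding lemma at each stage to chop off one more value of the sequence.

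Fix $p$, with $s := {\rm stem}(p)$ and $B := g_p(s)$, and set $p_B := p$, $M_{B-1} := s$. Inductively at stage $k \geq B$, with the invariant $g_{p_k}(M_{k-1}) \leq k$: pick $M_k \geq M_{k-1}$ minimal with $g_{p_k}(M_k) \geq k$ (which exists because $g_{p_k}$ is unbounded), noting that minimality together with non-decreasingness past the stem gives $\max_{n < M_k} g_{p_k}(n) \leq k$. Apply the preceding lemma with $t := \dot{a}_k$, $M := M_k$, $I := k$, then shrink the resulting condition still further so that $g_{p_{k+1}}(M_k) = k$ exactly; this yields $p_{k+1} \subseteq p_k$ with $g_{p_{k+1}}$ matching $g_{p_k}$ below $M_k$, $g_{p_{k+1}}(M_k) = k$, and $p_{k+1} \Vdash \dot{a}_k \leq \hat k$, so the invariant is preserved for stage $k+1$.

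Now set $g_q(n) := g_{p_k}(n)$ for $n \leq M_k$: coherence across stages plus $M_k \to \infty$ makes $g_q$ a total, non-decreasing-past-$s$, unbounded function, so the associated basic open $q$ is legitimate. Since $q \subseteq p_{k+1}$ for each $k \geq B$, we obtain $q \Vdash \forall n \geq \hat B\, (\dot{a}_n \leq n)$. For the cover: given any $f \in p$, its range is finite (points of $T$ have finite range) and bounded by some $B_f$; take a basic neighborhood $p_f \leq p$ of $f$ whose stem value equals $B_f$, and run the fusion starting from $p_f$. The resulting $q_f$ still contains $f$, since $g_{q_f}$ agrees with $g_{p_f}$, and hence with $f$, below ${\rm stem}(p_f)$, while past that stem $g_{q_f}(n) \geq B_f \geq f(n)$.

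The main obstacle is the bookkeeping of the fusion: the preceding lemma only guarantees $g_{p_{k+1}}(M_k) \geq k$, whereas the induction needs $g_{p_{k+1}}(M_k) \leq k+1$ at the next stage, so one must explicitly contract the lemma's output to enforce this upper bound (and similarly arrange that $g_{p_{k+1}}$ grows slowly enough past $M_k$) before proceeding.
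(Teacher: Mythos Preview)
Your argument is correct and is essentially the same fusion as the paper's: for each $f \in p$, take $N$ bounding $rng(f)$ and iteratively apply the preceding lemma with $I = N, N+1, N+2, \ldots$ to build a decreasing chain of basic opens whose intersection is a neighborhood of $f$ forcing $a_n \leq n$ for all $n \geq N$.

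The only real difference is bookkeeping. You insist on $M_k \geq M_{k-1}$ and then must shrink after each step to enforce $g_{p_{k+1}}(M_k) = k$ and restore the invariant; this is the ``main obstacle'' you flag. The paper sidesteps it by taking $M$ at each stage to be the \emph{globally} least index with $g_{q}(M) \geq I$: since the lemma preserves $g$ below the previous $M$, and those values were already $\leq I-1$, the new $M$ automatically lies at or beyond the old one and the hypothesis $\max_{n<M} g(n) \leq I$ is immediate. With that choice no shrinking is needed, and your concern about arranging that $g_{p_{k+1}}$ ``grows slowly enough past $M_k$'' also disappears. The paper also works directly from $p$ rather than first passing to a sub-neighborhood $p_f$, but this is cosmetic.
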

\begin{proof}
Suppose $p \Vdash a_n$ is a countable sequence through $rng(G)$.
We need to show that $p$ forces that $a_n$ is eventually bounded
beneath $n$. That means that $p$ forces the existence of an index
$N$ beyond which $a_n$ is forced to be less than $n$. Since
forcing existence is local, for any $f \in p$ we need to find a
neighborhood $r$ of $f$ forcing the adequacy of a particular index
$N$.

Fix $f \in p$. Let $N$ be $\sup(rng(f))$. Let $M$ be the smallest
natural such that $g_p(M) > N$. Notice that, since $f \in p, \; M
\geq {\rm stem}(p)$. Apply the previous lemma to $t := a_N$ to
extend $p$ to $q_N$. (Notice that $f \in q_N$, since $g_p$ and
$g_{q_N}$ agree up to $M$, and beyond that $g_{q_N}(n)$ always
bounds $rng(f)$.)

Now apply the previous lemma to $p := q_N, t := a_{N+1}, I :=
N+1,$ and $M$ the least index $n$ such that $g_{q_N}(n) \geq N+1.$
This produces a basic open set $q_{N+1}$ containing $f$ forcing
$a_{N+1} \leq N+1.$

Again, apply the previous lemma to $p := q_{N+1}, t := a_{N+2}, I
:= N+2,$ and $M$ the least index $n$ such that $g_{q_{N+1}}(n)
\geq N+2.$ This produces a basic open set $q_{N+2}$ containing $f$
forcing $a_{N+2} \leq N+2.$

By continuing this process for ever increasing values of $I$, the
function which is the pointwise limit of $g_{q_{N+i}}$ is
unbounded, and so (together with ${\rm stem}(p)$) determines an
open set $\bigcap_n q_n$ containing $f$ that by construction
forces each $a_n$ ($n \geq N$) to be bounded by $n$.
\end{proof}

In order to prove DC, we need appropriate analogues of the two
lemmas from above.

\begin{lemma}
Let $p$ be an open set forcing $``\exists y \; \psi(y)"$, and $I$
an integer such that $I \leq g_p({\rm stem}(p)).$ Then there is a
q extending p with the same stem and $g_q({\rm stem}(q)) \geq I$
forcing $\psi(\sigma)$ for some term $\sigma$.
\end{lemma}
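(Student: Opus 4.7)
The plan is to mimic the main lemma almost verbatim, the only novelty being that at the end we must extract a single witness term rather than propagate a single fixed value. Say that $q' \subseteq p'$ is \emph{good for} $p'$ if $q'$ has the same stem as $p'$, $g_{q'}({\rm stem}(q')) \geq I$, and $q' \Vdash \psi(\sigma')$ for some term $\sigma'$. The objective is to produce a $q$ good for $p$.

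For the inductive gluing step, suppose each of the finitely many basic open sets $p_i$ with $i \leq I$ has a good extension $q^i$ forcing $\psi(\sigma_i)$. Build $q$ exactly as in the main lemma: ${\rm stem}(q) = {\rm stem}(p)$, $g_q({\rm stem}(q)) = I$, and $g_q(n) = \min_i g_{q^i}(n)$ for $n > {\rm stem}(p)$. Then $q$ is covered by the pairwise disjoint basic open sets $q_i$ ($i \leq I$), each contained in the corresponding $q^i$, so $q_i \Vdash \psi(\sigma_i)$. The one new obstacle is that the $\sigma_i$'s may all differ, so we cannot immediately conclude $q \Vdash \psi(\sigma)$ for any one of them. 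This is handled by a standard amalgamation of names: define
\[
\sigma \;=\; \bigcup_{i \leq I} \{\,\langle \tau, s \cap q_i \rangle \mid \langle \tau, s \rangle \in \sigma_i\,\}.
\]
Since the $q_i$ are pairwise disjoint, one checks that $q_i \Vdash \sigma = \sigma_i$ for each $i$, so $q_i \Vdash \psi(\sigma)$, and by locality of forcing $q \Vdash \psi(\sigma)$. Thus $q$ is good for $p$.

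If no such $q$ exists, then by the above some $p_i$ with $i \leq I$ has no good extension; iterating produces a sequence $p^0 = p, p^1, p^2, \ldots$ of basic open sets, none having a good extension, with ${\rm stem}(p^{n+1}) = {\rm stem}(p^n) + 1$, whose initial parts cohere into a point $f \in p$ all of whose values from ${\rm stem}(p)$ onward lie in $\{0, 1, \ldots, I\}$. Since $p \Vdash \exists y\,\psi(y)$, the topological semantics of $\exists$ yields a basic open neighborhood $r \subseteq p$ of $f$ together with a term $\sigma$ such that $r \Vdash \psi(\sigma)$. Shrink $r$ by extending its initial part along $f$ until ${\rm stem}(r)$ is large enough, and then pick $g_r({\rm stem}(r)) \geq I$ (compatible with $r$ being a basic open subset of $p$ since $g_p$ is unbounded). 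A direct verification then shows $r \subseteq p^n$ for $n = {\rm stem}(r) - {\rm stem}(p)$, so $r$ is a good extension of $p^n$, contradicting the construction. As in the main lemma, this proof by contradiction can equivalently be recast as a Fan Theorem argument on the finitely-branching tree of bad extensions.
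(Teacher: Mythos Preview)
Your proof is correct and follows essentially the same route as the paper's own argument: the same gluing of good extensions $q^i$ into a single $q$ with stem unchanged and value $I$ at the stem, the same descent to a point $f$ when no good extension exists, and the same contradiction by shrinking a witnessing neighborhood of $f$ so that its value at the stem is at least $I$. The only minor variation is in the amalgamation of witnesses: the paper forms $\sigma = \bigcup_i \sigma_i \upharpoonright q^i$, whereas you restrict each $\sigma_i$ to the smaller (and pairwise disjoint) $q_i$; both work, and your choice makes the verification that $q_i \Vdash \sigma = \sigma_i$ marginally cleaner.
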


\begin{proof}
This argument is similar to that of the first lemma above. We need
the same notation: $p_i$, for $i \leq g_p({\rm stem}(p))$, is (to
put it informally) the same as $p$ only the value at the ${\rm
stem}(p)^{th}$ place is fixed to be $i$. If each $p_i$ has a good
extension $q^i$, meaning one satisfying the conclusion of the
lemma (when starting from $p_i$), then so does $p$, as follows.
Let $q$ be such that ${\rm stem}(q) = {\rm stem}(p)$, for $n <
{\rm stem}(q)$ $g_q(n)$ is the common value $g_p(n)$, $g_q({\rm
stem}(q))$ is $I$, and for all other $n$ $g_q(n)$ is $\min_i
g_{q^i}(n)$. $q$ is covered by the $q_i$s, each $q_i$ is a subset
of $q^i$, and each $q^i$ forces $\psi(\sigma_i)$. The $\sigma_i$s
can be amalgamated as follows. Recall that a term $\tau$ is a set
of the form $\{ \langle \tau_j, r_j \rangle \mid j \in J \}$,
where $\tau_j$ is (inductively) a term, $r_j$ an open set, and $J$
an index set. The restriction of $\tau$ to some open set $r$,
$\tau \upharpoonright r$, is defined as $\{ \langle \tau_j, r_j
\cap r \rangle \mid j \in J \}$. Intuitively, $\tau
\upharpoonright r$ is empty until you're beneath $r$, at which
point it becomes $\tau$. The amalgamation we want is $\sigma :=
\bigcup_i \sigma_i \upharpoonright q^i$, which roughly stands for
``wait until you know which $q^i$ you're in, then become
$\sigma_i$". This $\sigma$ witnesses that $q$ is a good extension
of $p$.

So if $p^0 := p$ did not have a good extension, neither would some
$p^1 := p_i$, nor some $p^2 := p^1_j$, etc. The $p^n$s cohere, or
converge, to some $f \in p$. By hypothesis, $f$ has some
neighborhood $r$ forcing $\psi(\sigma)$ for some $\sigma$. If need
be, shrink $r$ (to $r'$ say) consistently with $f$ so that
$g_{r'}({\rm stem}(r')) \geq I$. This $r'$ is a good extension of
some $p^n$, which is a contradiction. Again, as with \ref {main
lemma}, instead of a proof by contradiction, the Fan Theorem
suffices.
\end{proof}

\begin{lemma}
Let $p$ be an open set forcing $``\exists y \; \psi(y)"$, and $M
\geq {\rm stem}(p), I$ an integer such that $I \leq g_p(M).$ Then
there is a q extending p such that for $n < M \; g_q(n) = g_p(n),
{\rm stem}(q) = {\rm stem}(p),$ and $g_q(M) \geq I$, forcing
$\psi(\sigma)$ for some term $\sigma$.
\end{lemma}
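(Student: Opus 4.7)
The plan is to mimic the passage from the first basic lemma (for forcing $t \leq \hat I$) to its generalization allowing $M \geq \mathrm{stem}(p)$, exactly as was done earlier in the excerpt, but now starting from the previous existential lemma instead of from the $t \leq \hat I$ version. So the hypothesis $I \leq g_p(M)$ will be exploited by first chopping $p$ into many pieces each of which has its stem shifted out to $M$, then applying the previous lemma piecewise, and finally re-assembling both the open sets and the witnessing terms into a single condition and a single term.

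More concretely, I would first use the decomposition $p = \bigcup_i p_i$ from the proof of the first basic lemma, iterated $M - \mathrm{stem}(p)$ many times, to write $p$ as a finite open cover $\{q_k \mid k < K\}$ with $\mathrm{stem}(q_k) = M$ and $g_{q_k}(M) = g_p(M) \geq I$ for each $k$. Each $q_k$ still forces $\exists y\, \psi(y)$, so the previous existential lemma (applied with $q_k$ in place of $p$, since its stem already equals $M$ and its value at the stem is $\geq I$) yields an extension $q_k' \subseteq q_k$ with $\mathrm{stem}(q_k') = M$ and $g_{q_k'}(M) \geq I$ that forces $\psi(\sigma_k)$ for some term $\sigma_k$. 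Then I would amalgamate the witnesses in exactly the way described in the proof of the previous lemma: set
\[
\sigma \;:=\; \bigcup_{k<K} \sigma_k \upharpoonright q_k',
\]
so that beneath each $q_k'$ the term $\sigma$ behaves like $\sigma_k$.

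Finally I would fuse the $q_k'$'s back into one basic open set $q$ with the prescribed stem data: take $\mathrm{stem}(q) = \mathrm{stem}(p)$, $g_q(n) = g_p(n)$ for $n < M$, and $g_q(n) = \min_k g_{q_k'}(n)$ for $n \geq M$ (noting that all $q_k'$ agree with $p$ below $M$ by construction, and $g_q(M) \geq I$ because each $g_{q_k'}(M) \geq I$). As in the previous lemma, $q$ is again covered by the $q_k'$'s, so since each $q_k'$ forces $\psi(\sigma)$ (because beneath $q_k'$, $\sigma$ is $\sigma_k$), $q$ itself forces $\psi(\sigma)$.

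The only step that requires any real thought is the amalgamation of the $\sigma_k$'s into a single term, and the verification that $q$ actually forces $\psi(\sigma)$ rather than merely $\exists y\,\psi(y)$; but this is precisely the restriction-and-glue construction spelled out in the previous existential lemma, and it transfers verbatim here because a finite open cover of $q$ by the $q_k'$'s is all that is needed for localness of forcing. Everything else is a routine book-keeping copy of the transition from the first basic lemma to its $M$-generalization.
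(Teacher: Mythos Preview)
Your proposal is correct and is exactly the argument the paper has in mind: decompose $p$ into the finite cover $\{q_k\}$ with stems pushed out to $M$, apply the previous existential lemma to each piece, and then fuse both the open sets (via pointwise minimum beyond $M$) and the witnessing terms (via $\bigcup_k \sigma_k\upharpoonright q_k'$). One small imprecision: the $q_k'$ do not literally ``agree with $p$ below $M$''---on $[\mathrm{stem}(p),M)$ each $g_{q_k'}$ records the particular sequence of choices made in the decomposition, not $g_p$---but this is harmless since you define $g_q(n)=g_p(n)$ for $n<M$ directly, and the disjointness of the $q_k'$'s (they have distinct fixed initial segments of length $M$) is precisely what makes the term amalgamation clean.
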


The proof of this lemma is to the previous proof as the proof of
the lemma before that is to the one before it, and so is left to
the reader.

\begin{lemma} $T \Vdash$ DC.
\end{lemma}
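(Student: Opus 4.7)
The plan is to mimic the pseudo-boundedness proof, using the last two lemmas (on forcing $\exists y\;\psi(y)$) in place of the two lemmas on bounding $\mathrm{rng}(G)$. Fix an open set $p$ and suppose $p$ forces $A$ to be a set, $a \in A$, and $\forall x \in A\;\exists y \in A\;R(x,y)$; I want to show $p$ forces the existence of a DC sequence through $R$ starting at $a$. Since existence is a local condition, it suffices, for each $f \in p$, to produce a basic open $q_\infty \subseteq p$ containing $f$ together with a term $\sigma$ such that $q_\infty \Vdash$ ``$\sigma$ is a function on $\omega$ with $\sigma(0) = a$ and $R(\sigma(n), \sigma(n+1))$ for all $n$''.

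Fix $f \in p$ and let $B = \max \mathrm{rng}(f)$ (a natural, since $f$ has finite range). Build $(q_n, \sigma_n)$ recursively as follows. Set $q_0 := p$ and $\sigma_0 := a$. Given $q_n \ni f$ with $q_n \Vdash \sigma_n \in A$, let $M_n$ be the least index with $g_{q_n}(M_n) \geq \max(B, n+1)$; by choice of $B$, $M_n \geq \mathrm{stem}(q_n) = \mathrm{stem}(p)$. Apply the immediately preceding lemma to $q_n$, the statement $\exists y\,(y \in A \wedge R(\sigma_n, y))$, with $M := M_n$ and $I := \max(B, n+1)$. This yields $q_{n+1} \subseteq q_n$ with $\mathrm{stem}(q_{n+1}) = \mathrm{stem}(p)$, $g_{q_{n+1}}(k) = g_{q_n}(k)$ for $k < M_n$, $g_{q_{n+1}}(M_n) \geq \max(B, n+1)$, and a term $\sigma_{n+1}$ such that $q_{n+1} \Vdash \sigma_{n+1} \in A \wedge R(\sigma_n, \sigma_{n+1})$. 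Since $g_{q_{n+1}}$ is non-decreasing past its stem and dominates $B$ from index $M_n$ on, $f$ remains in $q_{n+1}$.

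The functions $g_{q_n}$ agree below $M_n$ and $M_n \to \infty$, so they have a pointwise limit $g_\infty$ which is non-decreasing past $\mathrm{stem}(p)$ and unbounded (as $g_\infty(M_n) \geq n+1$); hence $q_\infty$ determined by stem $\mathrm{stem}(p)$ and $g_{q_\infty} = g_\infty$ is a legitimate basic open set, is contained in each $q_n$, and contains $f$. Amalgamate the $\sigma_n$'s into a single sequence term, e.g.\ $\sigma := \{\langle \mathrm{pair}(\hat n, \sigma_n), T\rangle \mid n \in \omega\}$ using a term-level Kuratowski pair. Because $q_\infty \subseteq q_n$ for every $n$, $q_\infty$ forces $\sigma(0) = a$, $\sigma(n) \in A$, and $R(\sigma(n), \sigma(n+1))$ for all $n$, which is exactly the DC conclusion. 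Covering $p$ by such neighborhoods and letting $p$ range over all of $T$ then gives $T \Vdash$ DC.

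The only delicate point is the fusion bookkeeping: one must choose $I_n \geq B$ so that $f$ survives into every $q_n$, and simultaneously $I_n \to \infty$ so that the intersection $q_\infty$ is a basic open set rather than something degenerate. Both are handled by the single choice $I_n = \max(B, n+1)$; the amalgamation of the $\sigma_n$'s into a term for the sequence is then routine, exactly as in the proof of the first existential lemma.
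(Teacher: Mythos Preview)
Your argument is exactly the paper's fusion argument via the two existential lemmas, and you are in fact more explicit than the paper about locality: you fix $f\in p$ and verify $f\in q_\infty$, whereas the paper's proof just produces one open subset of $p$ forcing the choice sequence and leaves the covering implicit.

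One small slip in the bookkeeping: your justification ``by choice of $B$, $M_n \geq \mathrm{stem}(p)$'' does not go through for $n+1 \leq B$. In that range $\max(B,n+1)=B$, and since $g_{q_n}\upharpoonright\mathrm{stem}(p)=f\upharpoonright\mathrm{stem}(p)$ you only know $g_{q_n}(k)\leq B$ there, not $<B$; if $f$ attains its maximum $B$ at some $k<\mathrm{stem}(p)$ then $M_n\leq k<\mathrm{stem}(p)$ and the lemma cannot be applied. The fix is trivial---take $I_n=\max(B,n)+1$, or simply define $M_n$ as the least index $\geq\mathrm{stem}(p)$ with $g_{q_n}(M_n)\geq I_n$---and then everything else (including $M_n\to\infty$, which follows since $g_{q_n}(M_n)$ is non-increasing in $n$ yet $\geq I_n\to\infty$) goes through as you wrote.
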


\begin{proof}
Suppose $p \Vdash ``\forall x \; \exists y \; \phi(x,y)"$. (The
argument is unchanged if $x$ and $y$ are restricted to some set.)
Let $a_0$ be given. Using the previous lemma, at stage 0, extend
$p$ to $p_0$, with the same stem and the same value $I$ at ${\rm
stem}(p)$, forcing $\phi(a_0, a_1)$ for some $a_1$. At stage 1,
let $N_1$ be the least index $n$ such that $g_{p_0}(n) > I$, and
extend $p_0$ to $p_1$, with the same stem and same values at all
indices $n \leq N_1$, forcing $\phi(a_1, a_2)$ for some $a_2$.
Continue inductively, at stage $n$ preserving some occurrence of
an integer at least as large as $I+n$ as a function value, so that
$\bigcap_n p_n$ is an open set. By construction, $\bigcap_n p_n
\Vdash \forall n \in \N \; \phi(a_n, a_{n+1})$.
\end{proof}

This sequence of lemmas completes the proof of the main theorem.

\end {proof}

\subsection {The natural model for $\neg$BD}
BD is the assertion that every pseudo-bounded set of natural
numbers is bounded: BD-$\mathbb{N}$ without the assumption of
countability. So BD implies BD-$\mathbb{N}$, and the model above
falsifying BD-$\mathbb{N}$ must also falsify BD. We can do better
than that though. The generic above was not bounded: it is false
that there exists a bound. That's different from being unbounded:
$\forall N \exists i \in A \; i>N$. There are two good reasons
that $G$ above was not unbounded. For one, since $G$ is countable,
if it were unbounded it would not be pseudo-bounded. (Let $a_0$ be
$G(0)$; given $a_n = G(m)$, let $a_{n+1}$ be $G(k)$, where $k$ is
the least integer greater than $m$ such that $G(k) > G(m)$, which
exists by unboundedness. Then $(a_n)$ would witness that $G$ is
not pseudo-bounded.) For another, even if $G$ were not countable,
DC is enough to take an unbounded set and return a witness to
non-pseudo-boundedness. So we would like to find a counter-example
to BD which is unbounded.

More than that, we would like to find a counter-example to a
weakened version of BD. This is based on the following:

\begin {definition} $A \subseteq \mathbb{N}$ is {\bf sequentially
bounded} if every sequence of members of $A$ is bounded.
\end{definition}

Notice that if $A$ is sequentially bounded then $A$ is
pseudo-bounded. The converse does not hold, as $G$ from the
previous theorem illustrates. So the assertion ``if $A$ is
sequentially bounded then $A$ is bounded" differs from BD in that
it has a stronger hypothesis, and so is a weaker assertion. Weaker
assertions are harder to falsify. Hence the goal is to produce a
sequentially bounded, unbounded set. Can this be done?

The answer is a very satisfying yes, satisfying because it so well
complements the previous construction. Recall that it was argued
that the best guess for the last section's counter-example was to
take a generic over either the bounded or the unbounded sequences.
It turned out that the bounded sequences did the trick. The best
guess here would be a generic over either the bounded or unbounded
sets. It turns out that the choice this time is the other one: the
unbounded sets.

So let $T$ be the space of all unbounded sets $X$ of natural
numbers. A basic open set $O$ is given by a pair $\langle P_O,N_O
\rangle$, so called because $P_O$ is a finite set of naturals
giving the positive information and $N_O$ a set of naturals giving
the negative information. $X \in O$ iff $P_O \subseteq X$ and $N_O
\cap X$ is finite. (Hence for the second component we could have
taken instead an equivalence class from the power set of $\mathbb
N$ mod the ideal of finite sets.) For $O$ and $U$ open, $O \cap U$
is given by $\langle P_O \cup P_U, N_O \cup N_U \rangle$, so the
opens given do form a basis. Notice that $O = \emptyset$ iff $N_O$
is cofinite.

Let $M$ be the full topological model over $T$, and $G$ the
canonical generic: $O \Vdash n \in G$ iff $n \in P_O$. Notice that
nothing can ever be forced out of $G$.

\begin {theorem}
$T \Vdash G$ is unbounded and sequentially bounded.
\end {theorem}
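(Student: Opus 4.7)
The plan is to handle unboundedness in one line and then reduce sequential boundedness to a single structural lemma about natural-number terms forced to lie in $G$.

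For unboundedness, given any $B \in \mathbb{N}$ and any nonempty basic open $O = \langle P_O, N_O\rangle$, the set $N_O$ cannot be cofinite (else $O$ would be empty), so I can pick $k > B$ with $k \notin N_O$. Then $\langle P_O \cup \{k\}, N_O\rangle$ is a nonempty refinement of $O$ forcing $k \in G$ with $k > B$, so no basic open forces $G$ to be bounded; hence $T \Vdash G$ is unbounded.

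The heart of the sequential boundedness claim is the following key lemma: if $O \Vdash \sigma \in G$ for a natural-number term $\sigma$ (with $O$ nonempty), then $P_O$ is nonempty and $O \Vdash \sigma \leq \max P_O$. Granting this, for any sequence $(\sigma_n)_n$ forced in $G$ by some $O$, each $\sigma_n$ satisfies $O \Vdash \sigma_n \leq \max P_O$, so $O \Vdash \forall n\, \sigma_n \leq \max P_O$, yielding $T \Vdash G$ is sequentially bounded.

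To prove the key lemma I argue by contradiction. Suppose $Y \in O$ with $\sigma(Y) = k \notin P_O$. Then $Y' := Y \setminus \{k\}$ is still unbounded and still in $O$ (since $k \notin P_O$ gives $P_O \subseteq Y'$, while $Y' \cap N_O \subseteq Y \cap N_O$ is finite), so $\sigma(Y') = k' \in Y'$ is defined with $k' \neq k$. Pick basic opens $U, U' \subseteq O$ with $Y \in U$, $Y' \in U'$, $U \Vdash \sigma = k$, and $U' \Vdash \sigma = k'$; since $\sigma$ is forced to be functional, $U \cap U' = \emptyset$. The decisive combinatorial step---where I expect the main subtlety---is to show that $U \cap U' = \emptyset$ forces $\mathbb{N} \setminus (N_U \cup N_{U'})$ to be finite: otherwise $Z := P_U \cup P_{U'} \cup (\mathbb{N} \setminus (N_U \cup N_{U'}))$ is unbounded and lies in both $U$ and $U'$, since each of $P_U \cap N_V$ (for $V \in \{U,U'\}$) is contained in the finite set $P_U \cup P_{U'}$ while $(\mathbb{N} \setminus (N_U \cup N_{U'})) \cap N_V = \emptyset$. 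But then $Y'$, being unbounded with $Y' \cap N_U \subseteq Y \cap N_U$ finite and $Y' \cap N_{U'}$ finite (from $Y' \in U'$), decomposes as the union of its finite intersection with $N_U \cup N_{U'}$ and its intersection with the finite complement $\mathbb{N} \setminus (N_U \cup N_{U'})$, hence is itself finite---contradicting $Y' \in T$.
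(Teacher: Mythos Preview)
Your sequential-boundedness argument is correct and takes a genuinely different route from the paper's. The paper fixes the distinguished point $X_O = \mathbb{N} \setminus N_O$ and observes that any basic neighborhood $U$ of $X_O$ has negative part differing only finitely from $N_O$; after normalizing $N_U$ to $N_O$, the compatibility fact (any $\langle P',N_O\rangle$ with finite $P' \supseteq P_O$ meets every nonempty open below $O$) forces $O$ itself to decide each $a_n$, and the only naturals $O$ forces into $G$ are those in $P_O$. You instead take an arbitrary $Y \in O$, delete the putative value $k \notin P_O$ to get $Y' \in O$ with a different value $k'$, and use the disjointness criterion (disjoint basic opens have $N_U \cup N_{U'}$ cofinite) to conclude that $Y'$ is finite. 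Both proofs exploit the same combinatorics of the negative parts; yours is arguably more direct and avoids singling out a canonical point, while the paper's makes the rigidity of the forcing below $O$ more visible as a reusable lemma.

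There is, however, a real gap in your unboundedness paragraph. You show that every nonempty basic $O$ has a nonempty refinement forcing some $k > B$ into $G$; this gives only that the truth value of ``$\exists k \in G\;(k > B)$'' is \emph{dense}, and hence that $T \Vdash \neg(G\text{ is bounded})$. The theorem, however, asserts the positive statement $T \Vdash \forall B\,\exists k \in G\;(k > B)$, and the paper is explicit that ``not bounded'' and ``unbounded'' are distinct---obtaining an \emph{unbounded} counterexample is precisely the point of this second model. Your refinement $\langle P_O \cup \{k\}, N_O\rangle$ need not contain a prescribed $X \in O$, since you chose $k \notin N_O$ rather than $k \in X$. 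The fix is immediate: given $X \in T$ and $B$, pick $k \in X$ with $k > B$ (possible as $X$ is unbounded) and note $X \in \langle \{k\}, \emptyset \rangle \Vdash k \in G$; this covers $T$ and yields the positive unboundedness, which is exactly the paper's argument.
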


\begin {proof}
It is easy to see that $T \Vdash G$ is unbounded. Let $X \in T$,
and $n \in \mathbb{N}$. Choose $j > n, j \in X$. Let $O$ be given
by $\langle \{j\}, \emptyset \rangle$. Then $X \in O \Vdash j \in
G$. So $T$ is covered by open sets forcing $``\exists i > n \; i
\in G"$, hence $T$ forces the same. Since $n$ was arbitrary, $T
\Vdash ``\forall n \; \exists i > n \; i \in G."$

As for sequential boundedness, let $O \Vdash ``a_n$ is a sequence
through $G"$. We can assume that $O$ is basic and non-empty. We
will need the following fact about the topology: if $P_O'$ is a
finite extension of $P_O$, then the open set $O'$ determined by
$\langle P_O', N_O \rangle$ is compatible with (i.e. is not
disjoint from) every open subset of $O$. Namely, for $V \subseteq
O$ basic open, $P_V \cup (\mathbb{N} - N_V) \in V$; since $V
\subseteq O, \; P_V \cup (\mathbb{N} - N_V) \in O$; that means
$(P_V \cup (\mathbb{N} - N_V)) \cap N_O$ is finite; hence $P_O'
\cup P_V \cup (\mathbb{N} - N_V) \in O' \cap V$.

Returning to the main argument, let $X_O \in O$ be $\mathbb{N} -
N_O$. For each $n$, some basic open neighborhood $U$ of $X_O$
determines the values of $a_n$. $U$ is given by $\langle P_U, N_U
\rangle$, where $P_U$ is a finite subset of $X_O$, and, crucially,
$N_U$ differs from $N_O$ on a finite set, because $U$ is taken to
include $X_O$. Since an open set is unchanged by a finite change
to the second component, we can take $N_U$ to be $N_O$. By the
observation above, any other non-empty basic open subset of $O$ is
compatible with $U$. So any other value of $a_n$ that could be
forced by a subset of $O$ has to be compatible with the one forced
by $U$. So $O$ is covered by opens all forcing the same value of
$a_n$, hence $O$ forces $a_n$ to have that value. Since $O$ also
forces $``a_n \in G"$, and the only numbers $O$ forces to be in
$G$ are those in $P_O$, $O \Vdash a_n \in P_O$. Since $n$ was
arbitrary, $O$ forces $(a_n)$ to be bounded.

\end {proof}

\section{Application: Anti-Specker}
Douglas Bridges has shown \cite {Br} that BD-$\mathbb N$ implies that
the spaces that satisfy a version of the anti-Specker property are
closed under products. He asked whether the reverse implication is
true. Here we show it is not, by showing that in the model of the
previous section the anti-Specker spaces are closed under
products. As stated in the introduction, the purpose of this
exercise is not really anything about anti-Specker itself. Rather,
the goal is to bolster the claim that the models above are the
natural, gentlest models of the failure of BD and BD-$\mathbb N$.
If they are the right models, then they will falsify as little
else that was true in the ground model as possible. Presumably,
all classically true statements that do not imply BD resp.
BD-$\mathbb N$ would remain true in these models. It is entirely
possible that the same result about anti-Specker could be achieved
by consideration of the earlier-known realizability models (see
below, or \cite{Bee, BISV, L}). But nobody has a good guess which
of those models satisfy this anti-Specker closure property and
which not. Furthermore, it might be very difficult to determine
whether the anti-Specker closure holds or not, as opposed to the
argument in the topological model, which admittedly involves some
detail, but is ultimately straightforward. To give another example
of the same situation, it has recently been shown that the Riemann
Permutation Theorem, which is implied by BD-$\mathbb N$, is
strictly weaker than it, via an argument much like the
anti-Specker proof below \cite {LD}. The realizability experts were then
asked whether RPT held in any of the realizability models. They did
not have a clear guess in which it would, and were not able to
prove or refute RPT in them. That such well-qualified experts
could not do this indicates how much easier all of this is with
the topological models.

A metric space $X$ satisfies the one-point anti-Specker property
(notation: $AS^1(X)$) if, for every one-point extension $Z = X
\cup \{*\}$ of $X$ and sequence $(z_n) (n \in \mathbb{N})$ through
$Z$, if $(z_n)$ is eventually bounded away from each point in $X$,
then $(z_n)$ is eventually bounded away from $X$. (The name refers
to Specker's Theorem, which is that in computable mathematics the
closed interval [0,1] does {\em not} have this property \cite {BR,
S}.) Other variants of anti-Specker include $AS(X)$: every
sequence $(z_n)$ through any metric space $Z \supseteq X$
eventually bounded away from each point in $X$ is eventually
bounded away from $X$. $Z$ can also be held fixed (notation:
$AS(X)_Z$). It is known, for instance, that $AS^1([0,1])$ iff
$AS([0,1])_\mathbb R$ \cite {Br09}. For more background on the
anti-Specker properties, see \cite {BeB1, BeB2, Br09, Br0, Br}.

The purpose of this section is achieved with this

\begin {theorem}
$T \Vdash AS^1(X) \wedge AS^1(Y) \rightarrow AS^1(X \times Y).$
\end {theorem}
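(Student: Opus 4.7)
The plan is to mimic the classical argument that $AS^1(X) \wedge AS^1(Y) \to AS^1(X \times Y)$ by coordinate projection, replacing the one spot where the classical proof (Bridges) appeals to BD-$\mathbb{N}$ with a direct forcing calculation that uses the fusion-style lemmas already developed in Section 2.1. I would work in the forcing semantics: fix a condition $p$ forcing $AS^1(X) \wedge AS^1(Y)$, and suppose $p$ also forces that $(z_n)$ is a sequence through some one-point extension $Z = (X \times Y) \cup \{*\}$ eventually bounded away from each point of $X \times Y$. The goal is $p \Vdash (z_n)$ is eventually bounded away from $X \times Y$.

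First I would set up auxiliary one-point extensions $Z_X = X \cup \{*_X\}$ and $Z_Y = Y \cup \{*_Y\}$ with metrics inherited compatibly from $Z$, and define projected sequences $u_n \in Z_X$ and $v_n \in Z_Y$ via the coordinate maps, extended to send $*$ to $*_X$ and $*_Y$. Because ``$z_n = *$'' is generally not decidable in $M$, these projections should be defined metrically (through Cauchy approximations of the coordinates) rather than by case analysis on $z_n$. This setup step is routine constructive bookkeeping.

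The main obstacle is then to show that $(u_n)$ is eventually bounded away from each $x \in X$, and analogously for $(v_n)$ and $y \in Y$. Having ``$(z_n)$ eventually bounded away from $(x,y)$'' for every $y$ only gives, for each fixed $y$, an index $N_{x,y}$; passing to a single $N_x$ uniform over the hidden second coordinate is precisely the content of BD-$\mathbb{N}$, which fails in $T$. In its place I would deploy the generalized fusion lemma of Section 2.1 (the strengthening of Lemma \ref{main lemma}): starting from $p$, iteratively shrink to conditions $p \supseteq p_0 \supseteq p_1 \supseteq \dots$ while preserving longer and longer initial segments of $g_{p_k}$, at the $k$th stage deciding a bound relevant to the $k$th candidate point. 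The descending intersection still defines a basic open set of $T$, and it forces the desired uniform threshold $N_x$. I expect this uniformity extraction to be the technical heart of the argument.

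Once these uniform hypotheses are in hand, $AS^1(X)$ applied inside $M$ to $(u_n)$ gives that $u_n$ is eventually close to $*_X$, and $AS^1(Y)$ similarly gives that $v_n$ is eventually close to $*_Y$. By the compatibility of the projection metrics with that of $Z$, this forces $z_n$ to be eventually close to $*$ in $Z$, i.e., $(z_n)$ is eventually bounded away from $X \times Y$, which is the required conclusion. The projections and the combination step are straightforward once the uniformity in the previous paragraph has been secured.
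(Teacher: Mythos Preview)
Your coordinate-projection strategy has a real gap at exactly the point you flag as ``the technical heart.'' The difficulty is not merely to uniformize the threshold $N_{x,y}$ over $y$; even granting a single $N_x$, you also need a single $\epsilon_x>0$, and in any case $Y$ is an arbitrary metric space in the model, not a countable set, so this is not an instance of BD-$\mathbb{N}$ at all. The fusion lemmas of Section~2.1 let you make countably many decisions while keeping the condition open; they do not provide a substitute for BD-$\mathbb{N}$ (which is, after all, false in $M$), and your phrase ``at the $k$th stage deciding a bound relevant to the $k$th candidate point'' never says what those countably many candidate points are or why handling them suffices for every $y\in Y$. Without that, there is no argument that $(u_n)$ is eventually bounded away from each $x\in X$, and the appeal to $AS^1(X)$ never gets off the ground.

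The paper proceeds quite differently. It first uses fusion only to normalize: it produces $q\subseteq p$ so that for each $n$ there is a height $i_n$ at which every node of $Tr_q$ decides whether $x_n=\ast$. The core step is then a finiteness lemma on the tree: in any bounded subtree of $Tr_q$, only finitely many nodes at the heights $i_n$ force $x_n\neq\ast$. This is proved by contradiction. If it failed, K\"onig's Lemma gives a path $h$; one manufactures thinned auxiliary sequences $(\hat x_n),(\hat y_n),(\hat z_n)$ concentrated along $h$, and now $AS^1(X)$ is invoked \emph{inside} the contradiction: either $(\hat x_n)$ is eventually $\ast$ (impossible along $h$), or some specific $x$ is produced from which $(\hat x_n)$ is not eventually bounded away. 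A further fusion-style refinement isolates this $x$; the same maneuver with $AS^1(Y)$ isolates a $y$; and then $(x,y)$ contradicts the hypothesis on $(z_n)$. With the finiteness lemma in hand, one builds by stages an open $r\ni f$ below $q$ forcing $z_n=\ast$ for all large $n$. So $AS^1(X)$ and $AS^1(Y)$ are not applied to the global projections $(u_n),(v_n)$ as you propose, but to carefully constructed sequences arising in a tree argument; that indirection is what replaces BD-$\mathbb{N}$ here.
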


\begin {proof}
Let $p \Vdash ``AS^1(X) \wedge AS^1(Y) \wedge (z_n)$ is a sequence
through $X \times Y \cup \{*\}$ eventually bounded away from each
$(x,y) \in X \times Y,"$ and $f \in p$. We must find a
neighborhood of $f$ forcing $``(z_n)$ is eventually $*$."

By way of notation, let $x_n$ and $y_n$ be terms such that $p
\Vdash$ ``If $z_n \in X \times Y$ then $z_n = (x_n, y_n)$, and if
$z_n = *$ then $x_n = * = y_n$." Let $I = \sup(rng(f))$. Without
loss of generality, $p$ is basic open, with $g_p({\rm stem}(p))
\geq I$.

\begin{definition} A finite sequence of integers $\sigma$ of length at
least ${\rm stem}(p)$ is {\bf compatible} with $p$ if for all $i <
{\rm stem}(p) \; \sigma(i) = g_p(i)$ and for all $i$ with ${\rm
stem}(p) \leq i < length(\sigma) \; \sigma(i) \leq g_p(i)$. For
$\sigma$ compatible with $p, \; p \upharpoonright \sigma$ is
the open set $q \subseteq p$ such that ${\rm stem}(q) =
length(\sigma),$ for $i < {\rm stem}(q) \; g_q(i) = \sigma(i),$
and otherwise $g_q(i) = g_p(i)$.
\end {definition}

\begin {lemma}There is an open set $q$ such that
$f \in q \subseteq p, {\rm stem}(q) = {\rm stem}(p),$ and for $n
\in \mathbb{N}$ there is a length $i_n$ such that, for all
$\sigma$ of length $i_n$ compatible with $q$, either $q
\upharpoonright \sigma \Vdash ``x_n$ (equivalently $y_n$) = $*$"
or $q \upharpoonright \sigma \Vdash ``x_n$ (equivalently $y_n$) $
\not = *$."
\end {lemma}

\begin {proof}
Let $n \in \mathbb{N}$. First we prove a generalization of this
lemma for this fixed $n$. So let $j \geq {\rm stem}(p), J \geq I$
with $J \leq g_p(j)$, and $\sigma$ be a sequence of length $j$
compatible with $p$. We claim that there is an open set $q$
extending $p$ such that ${\rm stem}(q) = j, g_q \upharpoonright j
= \sigma,$ and $g_q(j) = J$, and there is an $i \geq j$ such that,
for all $\sigma$ of length $i$ compatible with $q$, $q
\upharpoonright \sigma$ decides whether $x_n$ (equiv. $y_n$) is
$*$ or not.

The proof of that claim is similar to that of the lemmas of the
previous section. For notational convenience, extend $p$ if
necessary so that $j = {\rm stem}(p)$. So the claim is that we can
build the desired $q$ only by shrinking $g_p$ beyond ${\rm
stem}(p)$, and even that by not too much (at ${\rm stem}(p)$, we
must still be at least $J$).

Using the notation from the last section, if for each $j \leq J$
$p_j$ had such a good extension $q_j$ with associated integer
$i_j$, then they could be amalgamated to a good extension of $p$,
with the amalgamation of the $i_j$'s being their maximum. So if
$p$ had no good extension, then neither would some direct
extension $p^1 := p_j$ of $p$. Similarly, $p^1$ would itself have
some direct extension $p^2$ with no good extension. Continuing
countably often, the sequence of $p^N$'s determines an $h \in p$.
Some neighborhood $r$ of $h$ must determine whether $x_n$ is $*$
or not, which can be restricted to a good extension of some $p^N$.
This contradicts the choice of $p^N$, so $p$ must have a good
extension.

Now apply the claim with $n = 0, j = {\rm stem}(p),$ and $J = I$,
which determines $\sigma$, to produce $q_0$ and $i_0$. Let $n = 1,
j \geq i_0$ such that $g_{q_0}(j) \geq I+1$, and $J = I+1$. For
each $\sigma$ of length $j$ compatible with $q_0$ and with range
bounded by $I$, use the claim to construct a $q_\sigma$ and an
$i_\sigma$. There are only finitely many such $\sigma$'s, so the
$q_\sigma$'s can be amalgamated via intersection to $q_1$ and the
$i_\sigma$'s via their maximum to $i_1$. More generally, at stage
$k > 0$, let $n = k, j \geq i_{k-1}$ such that $g_{q_{k-1}}(j)
\geq I+k$, and $J = I+k$. Use the claim to construct the
$q_\sigma$'s and $i_\sigma$'s, which are then amalgamated to $q_k$
and $i_k$.

Since the choice of $J$ is unbounded as $k$ runs through the
natural numbers, $q := \bigcap_{k \in \mathbb{N}}q_k$ is an open
set, and has the properties claimed.
\end {proof}

Let $q$ be as in the lemma. The members of $q$ naturally form a
tree $Tr_q$: the nodes are those finite sequences compatible with
$q$, and the members of $q$ are those paths through the tree with
bounded range. At height $j \geq {\rm stem}(q)$ of $Tr_q$, the
amount of splitting is $g_q(j)+1$. The nodes at height $i_n$
determine whether $x_n$ and $y_n$ are $*$ or not. We will have use
for subsets of $q$ the members of which have ranges that are
uniformly bounded. (Such subsets are, of course, not open.) Such
subsets can be given as the set of paths through a subtree $Tr$ of
$Tr_q$ with a fixed bound on the ranges of its nodes, as follows.

\begin {definition} A tree $Tr \subseteq Tr_q$ is {\bf bounded} if
there is a $J$ such that for all $\sigma \in Tr$ and $j <
length(\sigma) \; \sigma(j) < J.$
\end {definition}

\begin {lemma} Let $Tr \subseteq Tr_q$ be bounded. Then $\{ \sigma
\in Tr \mid length(\sigma) = i_n$ and $q \upharpoonright \sigma
\Vdash ``x_n \not = *"\}$ is finite.
\end {lemma}

\begin {proof}
Suppose not. Since $Tr$ (even $Tr_q$, for that matter) is finitely
branching, by K\"onig's Lemma there is a path $h$ such that each
initial segment of $h$ has such an extension. (Note that we do not
claim that $h$ itself goes through infinitely many such nodes,
which would make our lives easier if it were true.) Since $Tr$ is
bounded, $h$ is actually a point in the topological space $T$.

For each natural number $k$, choose an $n_k$ and $\sigma_k$
extending $h \upharpoonright k$ such that $q \upharpoonright
\sigma_k$ forces $``x_{n_k} \not = *"$. We can assume without loss
of generality that for increasing values of $k$ the $n_k$'s are
also increasing and that the $\sigma_k$'s agree with longer
initial segments of $h$. Let $\hat{x}_k$ be a term forced by
$\sigma_k$ to equal $x_{n_k}$ and forced by all other sequences of
the same length as $\sigma_k$ to equal $*$. Since $q \Vdash ``x_i
= *$ iff $y_i = *,"$ we can similarly define $\hat{y}_k$ to be a
term forced by $\sigma_k$ to equal $y_{n_k}$ and by all other
same-lengthed sequences to equal $*$. Let $\hat{z}_n$ be a term
standing for $(\hat{x}_n, \hat{y}_n)$ when those components are
not $*$ and $*$ when they are. Notice that $q \Vdash ``\hat{z}_n$
is either $z_n$ or $*$," so that (for arbitrary $(x,y)$) $q \Vdash
``$If $(z_n)$ is eventually bounded away from $(x,y)$, then so is
$(\hat{z}_n)$."

Since we are assuming that the $\sigma_k$'s agree with $h$ on ever
longer initial segments, then $\sigma_k$ has to extend $h
\upharpoonright k$. Hence if $\sigma(k) \not = h(k)$ then for $j
> k \; q \upharpoonright \sigma \Vdash ``\hat{x}_j = *"$. Hence $q -
\{h\} \Vdash ``(\hat{x}_n)$ is eventually $*$, and so is
eventually bounded away from $X$, and so in particular is bounded
away from each point of $X$." If $q \Vdash ``\forall x \in X \;
(\hat{x}_n)$ is eventually bounded away from $x$", then, since $p
\Vdash AS^1(X)$, $q \Vdash ``(\hat{x}_n)$ is eventually $*$".
This, however, contradicts the choice of $(\hat{x}_n)$ and $h$, as
no neighborhood of $h$ can force that. Hence for some $r \subset
q$ and $x$ we have $r \Vdash ``x \in X,"$ yet $r \not \Vdash
``(\hat{x}_n)$ is eventually bounded away from $x$". Coupled with
the opening observation in this paragraph, no neighborhood of $h$
can force $(\hat{x}_n)$ to be eventually bounded away from $x$. We
would like to thin this sequence so that $x$ is the only point in
$X$ with this property.

Let $r_1 \subseteq q$ force ``$\hat{x}_{n_{k_1}}$ is within 1 of
$x$." By extending $r_1$ if necessary, we can assume that ${\rm
stem}(r_1) \geq length(\sigma_{k_1}).$ In fact, it turns out to be
useful if they are equal. This can be arranged by thinning
$\hat{x}_{j_1}$. That is, letting $\sigma$ be $g_{r_1}
\upharpoonright {\rm stem}(r_1)$, consider a term which is forced
by $q \upharpoonright \sigma$ to be $\hat{x}_{n_{k_1}}$ and,
whenever $\tau \not = \sigma$ has the same length as $\sigma$,
forced by $q \upharpoonright \tau$ to be $*$. By abuse of
notation, we will use the same notation $\hat{x}_{n_{k_1}}$ for
this new term. Furthermore, we want to thin the sequence
$\hat{x}_n$ at places before $n_{k_1}$. So for $k < n_{k_1}$,
change $\hat{x}_k$ if need be to a term standing for $*$.

A desired effect of this thinning is that, for every node $\tau$
incompatible with $\sigma,$ $q \upharpoonright \tau$ forces the
sequence $(\hat{x}_n)$ through (that means including) $n_{k_1}$ to
be the constant $*$. In order to force $\hat{x}_{n_{k_1}}$ to be
within 1 of $x$, though, we have to be working not just beneath
$q$, but also beneath $r_1$. So let $s_1 \subseteq q$ be such that
${\rm stem}(s_1) = {\rm stem}(q)$, for ${\rm stem}(q) \leq k <
{\rm stem}(r_1) \; g_{s_1}(k) = \min(g_q(k), g_{r_1}({\rm
stem}(r_1))),$ and for $k \geq {\rm stem}(r_1) \; g_{s_1}(k) =
g_{r_1}(k).$ To summarize, $s_1 \Vdash ``$ for $k \leq j_1$ either
$\hat{x}_k = *$ or $\hat{x}_k$ is within 1 of $x$," and ${\rm
stem}(s_1) = {\rm stem}(q)$.

Let $r_2 \subseteq s_1$ force ``$\hat{x}_{n_{k_2}}$ is within 1/2
of $x$." By extending $r_2$ if necessary, we can assume that
$g_{s_1}({\rm stem}(r_2)-1) \geq I+1.$ Thin $(\hat{x}_n)$
similarly to the above, and define $s_2 \subseteq s_1$ to be such
that ${\rm stem}(s_2) = {\rm stem}(q)$, for $k < {\rm stem}(r_1)
\; g_{s_2}(k) = g_{s_1}(k)$, for ${\rm stem}(r_1) \leq k < {\rm
stem}(r_2) \; g_{s_2}(k) = \min(g_{s_1}(k), g_{r_2}({\rm
stem}(r_2))),$ and for $k \geq {\rm stem}(r_2) \; g_{s_2}(k) =
g_{r_2}(k).$ What this gets us is that the stem is not increasing,
for $k$ between $n_{k_1}$ and $n_{k_2}$ $\hat{x}_k$ is forced to
be either $*$ or within 1/2 of $x$, and $g_{s_2}({\rm
stem}(r_2)-1) \geq I+1$. That last fact will be preserved at
future steps, enabling us to take the intersection of these open
sets at the end and still have an open set.

Inductively, let $r_{i+1} \subseteq s_i$ force
$\hat{x}_{n_{k_{i+1}}}$ is within $1/i$ of $x$." By extending
$r_{i+1}$ if necessary, assume $g_{s_i}({\rm stem}(r_{i+1})-1)
\geq I+i.$ Thin $(\hat{x}_n)$, and define $s_{i+1} \subseteq s_i$
to be such that ${\rm stem}(s_{i+1}) = {\rm stem}(q)$, for $k <
{\rm stem}(r_i) \; g_{s_{i+1}}(k) = g_{s_i}(k)$, for ${\rm
stem}(r_i) \leq k < {\rm stem}(r_{i+1}) \; g_{s_{i+1}}(k) =
\min(g_{s_i}(k), g_{r_{i+1}}({\rm stem}(r_{i+1}))),$ and for $k
\geq {\rm stem}(r_{i+1}) \; g_{s_{i+1}}(k) = g_{r_{i+1}}(k).$ As
before, the stem is not increasing, for $k$ between $n_{k_i}$ and
$n_{k_{i+1}}$ $\hat{x}_k$ is forced to be either $*$ or within
$1/i$ of $x$, and $g_{s_{i+1}}({\rm stem}(r_{i+1})-1) \geq I+i$.

Finally, let $s_\infty = \bigcap_i s_i$. We have $h \in s_\infty,
s_\infty$ is open, and for all $t \subseteq s_\infty,$ if $h \in t
\Vdash ``(\tilde{x}_n)$ is a subsequence of $(\hat{x}_n)$," then
$t \not \Vdash ``(\tilde{x}_n)$ is eventually bounded away from
$x$."

Just as we had earlier found $r \subseteq q$ and $x$ with $r
\Vdash ``x \in X"$ yet $r \not \Vdash ``(\hat{x}_n)$ is eventually
bounded away from $x$," there are $t \subseteq s_\infty$ and $y$
such that $t \Vdash ``y \in Y,"$ yet $t \not \Vdash ``(\hat{y}_n)$
is eventually bounded away from $y$." Because of the monotonicity
of $(\hat{x}_n)$ approaching $x$ forced by $s_\infty$, $t \not
\Vdash ``(\hat{z}_n)$ is eventually bounded away from $(x,y)$."
Hence $t \not \Vdash ``(z_n)$ is eventually bounded away from
$(x,y)$." But this contradicts the opening hypothesis of the whole
theorem.
\end {proof}

Armed with this lemma, we are almost done. Consider the sub-tree
$Tr_1$ of $Tr_q$ of all finite sequences with entries less than or
equal to $I+1$. By the lemma, $\{ \sigma \in Tr_1 \mid
length(\sigma) = i_n$ and $q \upharpoonright \sigma \Vdash ``x_n
\not = *"\}$ is finite. Let $j_{I+1}$ be the maximum of the
lengths of the nodes in that set. So if some node of greater
length forces some $x_n$ not to be $*$, that node must have an
entry larger than $I+1$. More particularly, there is a largest
natural number, say $M$, such that $i_M \leq j_{I+1}$. For $m>M$,
if a node forces $x_m$ not to be $*$, then that node has an entry
greater than $I+1$. We start to define a function $g$. Let $g
\upharpoonright {\rm stem}(q) = g_q \upharpoonright {\rm
stem}(q)$, and for ${\rm stem}(q) \leq k \leq j_{I+1}, \; g(k) =
I$.

Now consider the larger sub-tree that on all levels $\leq j_{I+1}$
has only entries $\leq I$ (so is compatible with $g$), and beyond
that all numbers $\leq I+2$ may appear. (That is, $\sigma$ is in
the sub-tree iff $\sigma(k) \leq I$ for $k \leq j_{I+1}$ and
$\sigma(k) \leq I+2$ otherwise.) Again, since there are only
finitely many nodes forcing some $x_n$ not to be $*$, let
$j_{I+2}$ be the maximum of their lengths. Extend $g$ so that for
$j_{I+1} < k \leq j_{I+2}, \; g(k) = I+1$. Notice that, when we
use $g$ as the bounding sequence $g_r$ of a basic open set $r$,
there will be no nodes allowed by $g$ of length between $j_{I+1}$
and $j_{I+2}$ allowing an $x_n$ not to be $*$.

In general, at stage $e$, consider the sub-tree with growth
controlled up to height $j_{I+e}$ by the amount of $g$ built so
far, and allowing entries up to $I+e+1$ after that. Let
$j_{I+e+1}$ bound the lengths of nodes which force some $x_n$ not
to be $*$. Extend $g$ to be defined up to $j_{I+e+1}$ with the new
values being $I+e$.

After countably many of these steps, we will have defined $g$ to
be total. Let the basic open set $r$ be such that ${\rm stem}(r) =
{\rm stem}(q)$ and $g_r = g$. Then $f \in r \Vdash ``\forall m > M
\; x_m = y_m = *;"$ in other words,  $f \in r \Vdash ``(z_n)$ is
eventually $*$."
\end {proof}

\section {Realizability models}

The first model above was the first developed with the intention
of falsifying BD-$\mathbb{N}$. It was not the first observed to
falsify BD-$\mathbb{N}$. Namely, Ishihara \cite {I91,I92} showed
that certain continuity principles are equivalent with certain
foundational constructive principles, among which is
BD-$\mathbb{N}$. Continuity was studied well before
BD-$\mathbb{N}$ was ever identified, and models, apparently all of
them realizability models, were developed in which these
continuity properties fail. It was later observed that the other
foundational principles identified by Ishihara hold in these
models, and then concluded that BD-$\mathbb{N}$ must fail. This is
all very true, but somewhat unsatisfying. The argument is
roundabout. One would naturally ask, for instance, just what is
the pseudo-bounded yet unbounded set. The answer is, of course,
implicit in the chain of arguments leading to the conclusion that
BD-$\mathbb{N}$ fails in these models. It just takes some work
digging through all of that. In this section, we do that work for
a representative (albeit not random) sampling of these models.

\subsection {Extensional realizability}

In \cite {L} Lietz provides a thorough overview of realizability
models, and an analysis of the continuity principles validated and
falsified in some particularly interesting ones. We will examine
only one of these, extensional realizability ({\bf Ext}). To make
the paper self-contained, we will give the basics of {\bf Ext};
for more background, see \cite {O} or \cite {Bees}, ch. XI sec.
20.\footnote {Thanks are due here to Thomas Streicher for his
correspondence explaining {\bf Ext} to me.}

The objects are partial equivalence relations on the natural
numbers, which are also viewed as codes for computable functions
(in some standard way) when considering application. On the bottom
level, the naturals themselves, extensional equality is just
equality. A function from $\mathbb N$ to $\mathbb N$, i.e. a
member of $\mathbb N^\mathbb N$, is given by an index $e$ of a
total computable function. If two such indices, say $e$ and $e'$,
yield the same functions, then they are extensionally equal. For
an index $i$ to stand for a function from $\mathbb N^\mathbb N$ to
$\mathbb N$, on equal inputs $i$ must yield equal outputs:
$\{i\}(e) = \{i\}(e')$. (For $i$ to be a function from $\mathbb
N^\mathbb N$ to $\mathbb N^\mathbb N$, the outputs on
extensionally equal inputs do not have to be numerically equal,
just extensionally equal.)

At the level of the naturals, extensionality plays no role, and we
have the following fact, true also in many other realizability
models.

\begin {proposition}
In {\bf Ext}, every function from $\mathbb{N}$ to $\mathbb{N}$ is
computable.
\end {proposition}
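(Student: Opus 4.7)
The plan is to unpack the definitions of \textbf{Ext} given just above the proposition and verify that the realizer for any internal function $f:\mathbb{N}\to\mathbb{N}$ is itself a Kleene index for $f$ as a set-theoretic function on naturals.

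First I would fix an internal function $f:\mathbb{N}\to\mathbb{N}$. By the description of \textbf{Ext}, such an $f$ must be given by some index $e$ of a total computable function satisfying the relevant extensionality condition. At the base type $\mathbb{N}$, extensional equality coincides with numerical equality, so the extensionality condition on $e$ is automatic: if $n = n'$ as naturals then trivially $\{e\}(n) = \{e\}(n')$, and the outputs are compared by numerical equality as well. Hence no nontrivial quotienting happens, and each realizer $e$ determines an honest function $n \mapsto \{e\}(n)$ on $\mathbb{N}$.

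Next I would identify this function with $f$ itself. Since every natural $n$ is its own (canonical) realizer at type $\mathbb{N}$, the value $f(n)$ in the model is represented by the natural number $\{e\}(n)$, and extensional equality at the output type is again numerical equality. Thus $f$ is externally equal to the partial recursive function $\{e\}$, which is total. This exhibits $f$ as a total computable function with explicit index $e$.

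There is really no main obstacle here beyond being careful that nothing in the definition of \textbf{Ext} introduces non-recursive elements at the very bottom type; the whole point is that \textbf{Ext} is a variant of Kleene's number realizability, so the only functions $\mathbb{N}\to\mathbb{N}$ available are those with a recursive index. The argument also shows, as a byproduct, that Church's Thesis holds in \textbf{Ext}, which is why this model is a natural arena in which to look for failures of principles like BD-$\mathbb{N}$ later in the section.
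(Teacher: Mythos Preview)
Your argument is correct and morally the same as the paper's, but you lean on the semantic description of the exponential object (an element of $\mathbb{N}^{\mathbb{N}}$ in \textbf{Ext} is by definition an index $e$ with $f=\{e\}$), whereas the paper unwinds the realizability of the logical formula ``$f$ is a function,'' i.e.\ of $\forall n\,\exists m\,f(n)=m$: from a realizer $e$ of that sentence one extracts $f(n)=(\{e\}(n))_0$, so $f=\lambda n.(\{e\}(n))_0$. Your route is slightly shorter because it takes the PER-model description of morphisms as a black box; the paper's route is slightly more robust because it applies to anything internally satisfying the functionality predicate, not just to declared elements of the exponential, and it makes explicit which projection recovers the value. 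Either way the content is the same observation: at base type there is no quotienting, so the realizer already is (or immediately yields) a recursive index for $f$.
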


\begin {proof}
Let $e \Vdash ``f$ is a function from $\mathbb{N}$ to
$\mathbb{N}$." So $e \Vdash ``\forall n \; \exists m \; f(n)=m."$
Hence $\forall n \; (\{e\}(n))_1 \Vdash f(n) = \{e\}(n)_0$. So $f
= \lambda n.\{e\}(n)_0$ is computable.
\end {proof}

In fact, that proposition is almost enough to get BD-$\mathbb{N}$
to be true! Let $A$ be any countable set of naturals, and $f$ any
counting of $A$. Assuming a classical meta-theory, either $A$ is
bounded or it's not. If it is, great. If not, let $a_0$ be $f(0)$
and $a_{n+1}$ be the first value of $f$ greater than $a_n$.
$(a_n)$ is computable, and witnesses that $A$ is not
pseudo-bounded.

So, in contrast to the topological models, there is no specific
counter-example. Does that mean that BD-$\mathbb{N}$ is true?

\begin {theorem}
(\cite {L}) In {\bf Ext}, BD-$\mathbb{N}$ is false.
\end {theorem}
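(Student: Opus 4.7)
The plan is to unravel Lietz's indirect argument and exhibit, within {\bf Ext}, a specific countable set $A$ that is realizably pseudo-bounded yet not realizably bounded. Since every function from $\mathbb{N}$ to $\mathbb{N}$ in {\bf Ext} is total recursive by the preceding proposition, a countable set $A$ is presented as the range of a specific total recursive $f : \mathbb{N} \to \mathbb{N}$, and a realized sequence through $A$ amounts to a pair $(g, p)$ of total recursive functions with $g(n) = f(p(n))$ for all $n$. Realizability of pseudo-boundedness then requires a total recursive functional $R$ that, from the index of $(g, p)$, returns an $N$ witnessing $g(n) \leq n$ for $n \geq N$; realizability of non-boundedness requires only that for every candidate $B$ we can recursively exhibit $a \in A$ with $a > B$.

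The candidate $A$ is built as the range of an $f$ constructed by a recursion-theoretic diagonalization, in the spirit of standard constructions of hyperimmune r.e.\ sets but enhanced with the uniformity needed for pseudo-boundedness to be \emph{witnessed} rather than merely true. Concretely, one enumerates index pairs $(e, e')$ and, using the recursion theorem to react to each, arranges that if $(\phi_e, \phi_{e'})$ realizes a sequence through $\mathrm{rng}(f)$, then $\phi_e(n) \leq n$ for all $n \geq N(e, e')$, where $N$ is recursive in $(e, e')$. Simultaneous positive action places arbitrarily large values into $\mathrm{rng}(f)$ at cofinal stages, so that the range remains classically unbounded.

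With such an $A$, realizable non-boundedness is immediate: given $B$, the procedure that searches $f(0), f(1), \dots$ for a value exceeding $B$ terminates by the positive requirement. Realizable pseudo-boundedness is witnessed by the functional $R(e, e') := N(e, e')$ supplied by the diagonalization. Extensionality in {\bf Ext} is automatic because $N$ depends only on how $(\phi_e, \phi_{e'})$ behaves extensionally, which is exactly the invariant preserved by the PER on sequences through $A$.

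The main obstacle lies in the diagonalization itself: the positive requirement, which demands arbitrarily large values in $\mathrm{rng}(f)$, pulls against the negative requirements, which demand that no recursive sequence through $\mathrm{rng}(f)$ can outrun the identity in a uniform way. Balancing these calls for a careful priority construction. A secondary subtlety is the gap between pseudo-boundedness being true and being realizable: the meta-level discussion preceding the theorem produces a recursive counter-sequence only by classical case analysis, so the realizer $R$ must synthesize its bound effectively from the index, which is precisely what the diagonalization is engineered to provide.
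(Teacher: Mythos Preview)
Your approach cannot succeed, for a reason the paper spells out immediately before stating the theorem: in {\bf Ext} there is \emph{no} specific countable pseudo-bounded set that fails to be bounded. Suppose $A=\mathrm{rng}(f)$ with $f$ total recursive. Classically, either $A$ is bounded or not. If bounded, that bound realizes ``$A$ is bounded.'' If not, set $a_0:=f(0)$ and $a_{n+1}:=f\bigl(\mu m.\,f(m)>a_n\bigr)$; this is a total recursive, strictly increasing sequence through $A$ with an explicit recursive selector $p$, hence a realized sequence through $A$ in {\bf Ext}. Being strictly increasing in $\mathbb{N}$, it satisfies $a_n\geq n$ for all $n$, so no $N$ can witness eventual $a_n\leq n$. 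Your putative realizer $R$ of pseudo-boundedness would have to return such an $N$ on the index of $(a_n,p)$ and cannot. No diagonalization fixes this: the offending sequence is built \emph{from $f$ itself} by an unbounded search that terminates precisely because you have also arranged $\mathrm{rng}(f)$ to be unbounded. The positive and negative requirements you describe are not merely in tension; they are flatly incompatible.

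The paper's proof exploits exactly what your plan misses: the failure of BD-$\mathbb{N}$ in {\bf Ext} is a failure of \emph{uniformity}, not a failure at any particular instance. For each index $z$ of a functional $F:\mathbb{N}^{\mathbb{N}}\to\mathbb{N}$ one defines a set $A_z$ (roughly, the lengths $m$ for which some eventually-zero $g$ extending $0^m$ has $F(g)\neq F(0)$), shows $A_z$ pseudo-bounded using the sequential continuity of $F$ (itself a consequence of KLST), and then supposes a realizer $b$ of BD-$\mathbb{N}$ exists. From $b$ one extracts a bound $\{b\}(e_z,f_z)_0$ for each $A_z$, and by parametrizing $z$ through functionals $F_\beta$ one obtains an extensional, total, computable map $\beta\mapsto\{b\}(e_{z_\beta},f_{z_\beta})_0$ that is visibly discontinuous at the zero function, contradicting KLST. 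Each individual $A_z$ \emph{is} bounded; what cannot exist is a single $b$ bounding them all uniformly.
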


What's at stake is uniformity. Each instance of BD-$\mathbb{N}$ is
true, just not uniformly so.

Central to this proof is the KLST Theorem \cite{KST,Ts}. As should
become clear, BD-$\mathbb N$, or the lack thereof, could be viewed
as the difference KLST being true in the classical meta-theory and
being true internally in {\bf Ext}. It could also be viewed as the
gap within {\bf Ext} between the full KLST and the fragment of
KLST that happens to be true there, sequential continuity. KLST is
the following result in classical computability (then called
recursion) theory:

\begin {theorem}
(KLST) Every computable, integer-valued function, with domain
including the indices of the total computable functions, which is
extensional on those indices, is continuous. That is, there is a
partial computable function $M$ such that, if $\{z\}(y)$ converges
for every $y$ with $\{y\}$ total, and $\{z\}(y) = \{z\}(y')$
whenever $\{y\}$ and $\{y'\}$ are total and equal to each other,
then, for $\{y\}$ total, $M(z,y)$ is a modulus of convergence for
$\{z\}(y)$.
\end{theorem}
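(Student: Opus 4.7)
The plan is to establish KLST via the classical recursion-theoretic argument (in the Tsejtin style, with Kreisel-Lacombe-Shoenfield's effective Baire category version as backup). The algorithm for $M(z,y)$ runs in two phases. First, simulate $\{z\}(y)$ until it converges to a value $n$ (this is fine: $M$ is allowed to be partial, and our only obligation is to produce a modulus when $\{y\}$ is total and $z$ is extensional, in which case convergence is guaranteed). Second, search $k = 0, 1, 2, \ldots$ for a provably valid modulus, where the verification of each $k$ is the delicate part.

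To test a candidate $k$, I would use Kleene's recursion theorem to construct, uniformly in $z, y, k$, an index $y_k^*$ whose associated function $\{y_k^*\}$ is arranged as follows: on inputs $i \leq k$, it outputs $\{y\}(i)$; on inputs $i > k$, it dovetails the computations of $\{y\}(i)$ and $\{z\}(y_k^*)$. While the latter has not yet converged, or has converged to $n$, set $\{y_k^*\}(i) := \{y\}(i)$; if it has converged to a value $v \ne n$, then set $\{y_k^*\}(i)$ to something explicitly chosen so that $\{y_k^*\}$ becomes a different total function from $\{y\}$. This keeps $\{y_k^*\}$ total (since $\{y\}$ is), and it pins down $\{y_k^*\}$ as a function via the recursion-theoretic fixed point. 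By extensionality of $z$, the equation $\{y_k^*\} = \{y\}$ forces $\{z\}(y_k^*) = n$, and conversely the construction arranges that $\{z\}(y_k^*) = n$ forces $\{y_k^*\} = \{y\}$. The key dichotomy: if no $k$ were a modulus, then for arbitrarily large $k$ there would be a total index $\tilde y$ agreeing with $y$ on $[0,k]$ with $\{z\}(\tilde y) \neq n$; threading such an obstruction into the dovetailed construction produces a contradiction with extensionality. Hence some $k$ works, and the witness for $k$ being a modulus can be certified effectively by observing the convergence of $\{z\}(y_k^*)$ to $n$ together with a bound on the use of the computation.

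The main obstacle is making the self-referential construction precise enough that (a) totality of $\{y_k^*\}$ is established without circularity, (b) the extensionality of $z$ genuinely pinches the construction when no modulus exists, and (c) the validity of $k$ is computably certifiable so that the outer search actually halts and $M$ is partial computable. The classical KLS approach sidesteps (c) by working in the Polish-style space of indices of total computable functions and applying an effective Baire category theorem: the set of ``bad'' indices (for which $\{z\}$ takes values $\neq n$) is effectively meager in any neighborhood agreeing with $y$, so an effective category argument yields the modulus directly. I would fall back on that formulation if the Tsejtin-style diagonal becomes hard to control, since the paper only needs KLST as a black box for the subsequent {\bf Ext} analysis.
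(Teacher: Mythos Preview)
The paper does not prove KLST; it is stated with citations to the original sources (Kreisel--Lacombe--Shoenfield and Tsejtin) and then invoked as a black box in the analysis of \textbf{Ext}. Your closing remark that ``the paper only needs KLST as a black box for the subsequent \textbf{Ext} analysis'' is exactly right, and there is no proof in the paper against which to compare your sketch.

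As a standalone outline, your plan is recognizably the Tsejtin-style diagonal, with the KLS effective-category argument held in reserve, and the obstacles you list under (a)--(c) are the genuine ones. The point most in need of tightening is your certification step for (c). Observing that $\{z\}(y_k^*)$ converges to $n$ tells you, by your own construction, only that $\{y_k^*\} = \{y\}$; that carries no information about $k$ being a modulus. Nor is there a ``use'' in the oracle sense to read off, since $\{z\}$ is applied to the \emph{index} $y_k^*$, not to function values. The standard repair builds a single recursion-theorem fixed point $e$ (rather than one per $k$) which copies $\{y\}$ while monitoring $\{z\}(e)$; once $\{z\}(e)$ converges at some stage $s$, the construction deliberately steers $\{e\}$ away from $\{y\}$ on inputs $\geq s$, and then a further argument (or the effective-category route) extracts the modulus from $s$ and the extensionality hypothesis. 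Your sketch gestures at this but does not close the loop. Since you already plan to fall back on the KLS argument, and since the paper treats the result as known, this is not a defect for the purposes of the paper.
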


Lietz \cite L used KLST to show the sequential continuity of all
functions from $\mathbb N^\mathbb N$ to $\mathbb N$ in {\bf Ext.}
Ishihara's \cite {I92} analysis of continuity has as a particular
case that BD-$\mathbb N$ yields that sequential continuity implies
continuity. Troelstra \cite T showed that extensionality plus a
modest amount of choice (which holds in {\bf Ext}) implies that
not all functions from $\mathbb N^\mathbb N$ to $\mathbb N$ are
continuous; a more accessible source is \cite {Bees}, ch. XI sec.
19. Taken together, as observed in \cite L, {\bf Ext} falsifies
BD-$\mathbb N$. The following argument takes those three proofs,
applies them to {\bf Ext}, and pulls out the concrete
counter-example to BD-$\mathbb N$ in {\bf Ext}.

\begin {proof}
First, in {\bf Ext}, every $F:\mathbb N^\mathbb N \rightarrow
\mathbb N$ is sequentially continuous, as follows. Suppose $g_n
\rightarrow g$ in $\mathbb N^\mathbb N$. A realizer $z \Vdash
F:\mathbb N^\mathbb N \rightarrow \mathbb N$ is also an index for
computing $F$: $\{z\}(x) = F(\{x\})$. Similarly, a realizer $y
\Vdash g \in \mathbb N^\mathbb N$ computes $g$: $\{y\}=g$. By
KLST, $M(z,y)$ is a modulus of convergence for $F$ at $g$.
However, $M$ may not be the index of a function of higher type in
{\bf Ext}, as it may not be extensional. We seek something more
modest -- a modulus of convergence only for the sequence $(g_n)$
-- yet this modulus must be extensional. Since $(g_n)$ converges
to $g$, there is a $k$ beyond which $g_n\upharpoonright M(z,y) =
g\upharpoonright M(z,y)$. That is, for $n>k$, $g_n$ agrees with
$g$ up to a modulus of convergence, so $F(g_n) = F(g)$. So
evaluate $\{z\}(y_n)$ for $n$ from 0 through $k$, and pick the
least $n$ beyond which $\{z\}(y_n)$ is the constant value $F(g)$.
That point witnesses the sequential continuity of $F$ for $g_n
\rightarrow g$.

By way of notation, let $e_0$ be a canonical index for the
constant 0 function: $\{e_0\}(n) = 0$. By saying $g$ extends
$0^m$, we mean that for $x<m \; g(x)=0$. Later on we will have use
for the type 2 version of $e_0$, which we call $E_0$, the constant
0 function with inputs from $\mathbb N^\mathbb N$.

Working in {\bf Ext}, let $\{z\} = F:\mathbb N^\mathbb N
\rightarrow \mathbb N$. Let $A_z$ be $\{0\} \cup \{m \mid$ there
is a $g \in \mathbb N \rightarrow \mathbb N$ extending $0^m$ and
eventually 0 with $\{z\}(g) \not = \{z\}(e_0) \}$. $A_z$ is
countable: there are only countably many eventually 0 $g$'s, say
$(g_n)$; at stage $i$ evaluate $\{z\}(g_i)$; if that's unequal to
$\{z\}(e_0)$ then generate the appropriate integers into $A_z$,
else generate another 0. Let $e_z$ be an index for a counting of
$A_z$.

Moreover, $A_z$ is pseudo-bounded, as follows. Given a sequence
$(m_n)$ of members of $A_z$, let $h_n$ be $\{e_0\}$ if $m_n < n$,
and the least corresponding $g$ (i.e. extending $0^{m_n}$ and
eventually 0 with $\{z\}(g) \not = \{z\}(e_0)$) if $m_n \geq n$.
Since $h_n$ extends $0^n$, $h_n \rightarrow \{e_0\}$. By
sequential continuity, we have an index beyond which $\{z\}(h_n) =
\{z\}(e_0)$. Whenever $\{z\}(h_n) = \{z\}(e_0)$, we cannot be in
the second case in the definition of $h_n$. So we're in the first
case: $m_n < n$. This is exactly the pseudo-boundedness of $A_z$.
Let $f_z$ be the realizer for the pseudo-boundedness of $A_z$ just
constructed.

To show that BD-$\mathbb N$ is not realized, it is enough to
suppose it is, and come up with a contradiction. So suppose $b
\Vdash ``$if $A \subseteq \mathbb N$ is countable and
pseudo-bounded then $A$ is bounded." In particular, if $z$ is an
index as above, $\{b\}(e_z,f_z) \Vdash ``A_z$ is bounded," and
$\{b\}(e_z,f_z)_0$ is a bound for $A_z$. Let $m$ be
$\{b\}(e_{E_0},f_{E_0})_0$.

Given $\beta:\mathbb N \rightarrow \mathbb N$, let $F_\beta :
\mathbb N ^ \mathbb N \rightarrow \mathbb N$ with index $z_\beta$
be as follows. Given $\alpha \in \mathbb N ^ \mathbb N$,
$F_\beta(\alpha)$ depends only on $\alpha(m+1)$. If $\alpha(m+1) =
0$, then $F_\beta(\alpha) = 0;$ else $F_\beta(\alpha) =
\beta(\alpha(m+1)-1).$ In words, to see $F_\beta$, take the
countably branching tree $\mathbb N^{< \mathbb N}$; go up to the
$m^{th}$ level; each node there has countably many immediate
successors; label the 0$^{th}$ successor with 0, and spread
$\beta$ out on the other successors; given $\alpha$ a branch
through that tree, follow $\alpha$ up to level $m+1$ and return
the value encountered there.

If $\beta = \{e_0\}$, then $F_\beta = \{E_0\}$, and by
extensionality, $\{b\}(e_{z_\beta},f_{z_\beta})_0 = m$. On the
other hand, if $\beta \not = \{e_0\},$ then $A_{z_\beta} = \{0,
... , m+1\}$. Hence $m$ would not be a bound for $A_{z_\beta}$,
and $\{b\}(e_{z_\beta},f_{z_\beta})_0 > m$.

To conclude, $\lambda \beta . \{b\}(e_\beta,f_\beta)_0$ is a
total, computable, extensional function. By KLST, it's continuous.
But we've just seen it's not: at $\{e_0\}$ it returns $m$, but
does not do so in any neighborhood of $\{e_0\}$.
\end {proof}

\subsection {fp-realizability}

Beeson \cite{Bee} introduced {\bf formal-provable realizability},
abbreviated {\bf fp-realizability}, in order to show the
independence from a theory of constructive arithmetic of some
continuity theorems, namely KLST (there called KLS), discussed in
the previous section, and MS (Myhill-Shepherdson), the variant of
KLST for partial computable functions. Beeson and Scedrov
\cite{BS} extended fp-realizability to a model of full IZF set
theory. Much later, following Ishihara's analysis of continuity,
Bridges et al. \cite{BISV} realized that fp-realizability
validates the other principles Ishihara identified, and so must
falsify BD-$\mathbb{N}$. In this section, we bring out exactly how
BD-$\mathbb{N}$ fails.

For our purposes, the most important clause in the inductive
definition of realizability is implication. In standard
realizability, this is given as:
\begin {center}
$e \Vdash \phi \rightarrow \psi$ iff $\forall x \; (x \Vdash \phi
\rightarrow \{e\}(x) \Vdash \psi)$.
\end {center}
Kleene also defined a modification of this realizability, that
includes not only that $x$ must realize $\phi$ but also that
$\phi$ must be provable:
\begin {center}
$e \Vdash \phi \rightarrow \psi$ iff $\forall x \; (x \Vdash \phi
\wedge Pr(\phi) \rightarrow \{e\}(x) \Vdash \psi)$,
\end {center}
where $Pr$ is some appropriate proof predicate. Beeson's
fp-realizability does this one step better, by having that not
only must $x$ realize $\phi$, and not only that $\phi$ is
provable, but even that the realizability of $\phi$ by $x$ must be
provable:
\begin {center}
$e \Vdash \phi \rightarrow \psi$ iff $\forall x \; (Pr(x \Vdash
\phi) \rightarrow \{e\}(x) \Vdash \psi)$,
\end {center}
where we can afford to eliminate the clause $x \Vdash \phi$ from
the antecedent by the presumed soundness of the provability
predicate. What makes this work for our purposes is that more
needs to be put in than needs to be put out: the input must be
provably realizing, the output merely realizing.

The version of fp-realizability Beeson uses is non-numerical, in
that the realizers themselves are suppressed. That is, a
translation is given from formulas $\phi$ to formulas $\phi^r$,
where the latter should be thought of as ``$\phi$ is realized."
The reason given for doing that is that it makes the proof of
soundness of fp-realizability easier, although it is observed that
it actually makes a difference in some cases about what's
realized, fortunately not in any cases of current interest. The
version of fp-realizability given below contains the realizers,
because the reader is likely to be more familiar and comfortable
with such a presentation. It was derived from Beeson's
non-numerical variant by making what seemed like the only possible
such extrapolation. The clauses are:
\begin {flushleft}
$e \Vdash \phi$ iff $\phi$ (for $\phi$ atomic)

$e \Vdash \phi \wedge \psi$ iff $e_0 \Vdash \phi$ and $e_1 \Vdash
\psi$

$e \Vdash \phi \vee \psi$ iff ($e_0 = 0$ and $Pr(e_1 \Vdash
\phi)$) or ($e_0 = 1$ and $Pr(e_1 \Vdash \psi)$

$e \Vdash \phi \rightarrow \psi$ iff for all $x$ if $Pr(x \Vdash
\phi)$ then $\{e\}(x) \Vdash \psi$

$e \Vdash \forall x \; \phi(x)$ iff for all $x \; e \Vdash
\phi(x)$

$e \Vdash \exists x \; \phi(x)$ iff $Pr(e_1 \Vdash \phi(e_0))$.
\end {flushleft}
As usual, $\neg \phi$ is an abbreviation for $\phi \rightarrow
0=1$.

\begin {theorem}
(\cite {BISV}) Under fp-realizability, BD-$\mathbb{N}$ is false.
\end {theorem}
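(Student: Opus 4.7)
The plan is to follow the template of the \textbf{Ext} proof from the previous subsection, adapted to exploit the distinctive feature of fp-realizability: the antecedent of a realized implication must be \emph{provably} realized, while the consequent need only be realized. The consequence is that a realizer of BD-$\mathbb N$ acts as a computable function on inputs that carry provable certificates of being countable and pseudo-bounded, and returns a bound. This extra provability hypothesis is just the leverage needed to pull a continuous extensional type-$2$ functional out of any such realizer and then diagonalize against continuity via KLST, exactly as in the \textbf{Ext} argument.

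First I would record, as in the Ext proposition, that in fp-realizability every function $\mathbb N \to \mathbb N$ is (an index for) a total computable function; the realizability clauses for $\forall$ and $\exists$ make this immediate. Next, for each index $z$ I would define
\[ A_z = \{0\} \cup \{m \mid \text{some } g \text{ extending } 0^m, \text{ eventually } 0, \text{ has } \{z\}(g) \neq \{z\}(e_0)\},\]
exactly as in the \textbf{Ext} construction. Uniformly in $z$ I would produce an index $e_z$ enumerating $A_z$ and a realizer $f_z$ of its pseudo-boundedness. The new burden over the \textbf{Ext} argument is that these constructions must not merely be correct but be \emph{provably} correct in the formal theory underlying $Pr$: one needs $Pr(e_z \Vdash A_z \text{ is countable})$ and $Pr(f_z \Vdash A_z \text{ is pseudo-bounded})$, uniformly in $z$. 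This is where the results of \cite{BISV} enter: the sequential continuity principle that justifies pseudo-boundedness of $A_z$ is internally validated there, so its formal proof in the background theory yields the required provability, uniformly.

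Now suppose $b \Vdash \text{BD-}\mathbb N$. By the fp-clause for implication together with the uniform provability just established, for every $z$ the value $m_z := \{b\}(\langle e_z, f_z \rangle)_0$ is defined and bounds $A_z$. The assignment $z \mapsto m_z$ is a total computable function on indices, and extensionally equal $z$'s yield identical $A_z$'s and hence identical uniformly chosen $e_z, f_z$, so $z \mapsto m_z$ descends to an extensional operator on indices of type-$2$ functionals. By KLST in the meta-theory it is continuous on that domain. But, just as in the \textbf{Ext} argument, I can build a $z'$ that agrees with the constant-zero type-$2$ functional $E_0$ on every eventually-zero input relevant to $\{b\}(\langle e_{E_0}, f_{E_0} \rangle)$, yet with $A_{z'}$ containing an integer larger than $m_{E_0}$; the corresponding $m_{z'}$ would then have to exceed $m_{E_0}$, contradicting continuity at $E_0$.

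The main obstacle is the provability requirement in the second step: internalizing, uniformly in $z$, the sequential-continuity argument underlying $f_z$ as a formal derivation in the theory that $Pr$ formalizes. In the \textbf{Ext} proof this is a semantic argument using KLST externally; here we need an actual proof object witnessing that $f_z$ realizes pseudo-boundedness of $A_z$. Once that internalization (essentially supplied by \cite{BISV}) is in hand, the remainder is the same diagonalization against KLST that drives the \textbf{Ext} case.
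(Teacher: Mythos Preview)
Your approach has a genuine gap at the point where you invoke KLST. The Ext argument works because in \textbf{Ext} every realizer is \emph{by definition} extensional: that is the content of the model, and it is precisely what lets you apply KLST to $\{b\}$. In fp-realizability there is no extensionality constraint on realizers whatsoever; $\{b\}$ acts on numerical codes. Your claim that extensionally equal $z$'s yield ``identical'' $e_z,f_z$ is not correct: the construction of $e_z$ runs the computation $\{z\}(g_i)$ step by step, so the resulting index $e_z$ is a function of the code $z$, not of the function $\{z\}$. Even if the enumerating \emph{functions} $\{e_z\}$ coincide for extensionally equal $z$'s, the codes $e_z,e_{z'}$ will differ, and nothing in fp-realizability forces $\{b\}$ to return the same bound on extensionally equal inputs. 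Without extensionality KLST does not apply, and the diagonalization against continuity collapses. (There is a secondary issue: your pseudo-boundedness realizer $f_z$ in the Ext case relied on KLST-derived sequential continuity, and Beeson built fp-realizability precisely to falsify KLST internally; so the ``internalization essentially supplied by \cite{BISV}'' you invoke is not available either.)

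The paper takes an entirely different route, and in fact explicitly anticipates and rejects your strategy: ``you might think that the failure of BD-$\mathbb N$ is once again the lack of uniformity. Quite to the contrary, here we have a case of a particular counter-example instead.'' The paper exhibits one concrete set, $\mathrm{rng}(\{v\})$, where
\[
\{v\}(n)=\max\bigl\{k<n \;\bigm|\; \forall j,w,z<k\ (j\text{ proves }\{w\}\text{ total}\ \Rightarrow\ \{w\}(z)\downarrow_{<n})\bigr\}.
\]
Countability and (actual) unboundedness are straightforward. For pseudo-boundedness the provability clause is used directly, not via any continuity principle: if $x$ \emph{provably} realizes that $f$ maps into $\mathrm{rng}(\{v\})$, pick a proof code $N>x$; this $N$ in particular proves $\{x\}$ total. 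For $n>N$, instantiate $j,w,z$ by $N,x,n$ in the defining condition of $\{v\}(\{x\}(n)_0)=f(n)$: since $\{x\}(n)\geq\{x\}(n)_0$, no $k>n$ can belong to the set whose max is taken, so $f(n)\leq n$. No KLST, no extensionality, no uniformity argument---just a direct exploitation of the formal-provability hypothesis built into the $\rightarrow$-clause.
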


By analogy with the realizability from the previous section, and
realizability in general, you might think that the failure of
BD-$\mathbb{N}$ is once again the lack of uniformity. Quite to the
contrary, here we have a case of a particular counter-example
instead.

\begin {definition}
$\{w\}(z) \downarrow_{<n}$ if the function coded by $w$ when
applied to $z$ converges in fewer than $n$ many steps with output
less than $n$.
\end {definition}

\begin {definition}
Let $\{v\}(n) = max \{k<n \mid \forall j,w,z < k$ if $j$ codes a
proof that $\{w\}$ is total then $\{w\}(z) \downarrow_{<n}$\}.
\end {definition}

\begin {proof}
We will show that rng($\{v\}$) is the desired counter-example.

For the countability of rng($\{v\}$), we need that $\{v\}$ is
realized to be total. The realizer for this is $v$ itself, which
works as long as $\{v\}$ is actually total. That's the case
because $\{v\}(n)$ is the maximum of a bounded set, and membership
in the set is determined by a finite search.

Now consider the task of realizing that the range of $\{v\}$ is
not bounded. First note that $\{v\}$ is actually unbounded: to get
an $n_k$ with $\{v\}(n_k) \geq k$, we need to consider all proofs
$j<k$ that $w<k$ codes a total function; by soundness $\{w\}$ then
is total; so just wait long enough so that for all such $w$ and
$z<k \; \{w\}(z)$ has converged. So no $e$ could realize that $k$
is a bound to rng($\{v\}$), because $n_k$ is a counter-example to
that. Hence nothing can realize that rng($\{v\}$) is bounded. By
the definition of forcing a negation, everything realizes that
rng($\{v\}$) is not bounded.

The real work is showing that the range of $\{v\}$ is
pseudo-bounded. We need to realize ``if $f$ enumerates a subset of
$rng\{v\}$ then there is a bound beyond which $f(n) \leq n$."
Suppose $x$ provably realizes the antecedent: \begin {center}
$Pr(x \Vdash \forall i \; \exists m \; f(i) = \{v\}(m))$.
\end {center}
In particular, for all $i, \; f(i) = \{v\}(\{x\}(i)_0)$.

Let $N>x$ code such a proof. In particular, $N$ also proves that
$\{x\}$ is total, which is all we need. Then for $n>N$

\qquad $f(n) = \{v\}(\{x\}(n)_0)$

\qquad \qquad \;= max $\{k<\{x\}(n)_0 \mid \forall j,w,z < k$ if
$j$ codes a proof

\qquad \qquad \qquad \qquad \qquad \qquad \qquad that $w$ is total
then $\{w\}(z) \downarrow <_{\{x\}(n)_0}$\}.\\ Consider any $k >
n$, by way of seeing whether it's in the set above. Let $j, w, z$
be $N, x, n$, respectively. We need to consider whether
$\{x\}(n)\downarrow <_{\{x\}(n)_0}$. That could not happen, since
$\{x\}(n) \geq \{x\}(n)_0$, as a pair is at least as large as each
of its components. So $f(n)$ is the max of a set which includes
nothing greater than $n$, hence $f(n) \leq n$.
\end {proof}

\section{Questions}
Both topological models presented here were called the natural
models. This was done because it feels right. They seem like the
obvious guesses for topological models violating BD and
BD-$\mathbb N$. Also, they seem to violate as little else as
possible, as for instance the property of anti-Specker spaces
discussed still holds. But what could it mean for these models to
be natural? How could that be made more precise?

What other independence results could these models show? What
other consequences of BD and BD-$\mathbb N$ might still hold in
them?

We discussed two realizability models, one with a specific
counter-example, the other with no counter-example, just a lack of
uniformity. The topological models both have counter-examples. Is
there a topological failure of BD or BD-$\mathbb N$ with no one
counter-example? This is possible on general principles: it could
be that every pseudo-bounded set is not not bounded, while there
is no open set forcing a bound for each pseudo-bounded set
simultaneously. A general way of doing this is forcing with
settling \cite {LR, Lu}. The reason that answer is not
satisfactory is that settling does not produce a model of IZF.
Power Set must fail; even Subset Collection would. So what would
be a topological model of IZF in which each instance of BD (resp.
BD-$\mathbb N$) holds densely but BD (resp. BD-$\mathbb N$)
doesn't?

We discussed the models over the space of bounded sequences and
the space of unbounded sets. What are the models like over the
space of unbounded sequences and the space of bounded sets? Is
there anything interesting going on there, especially relative to
BD and BD-$\mathbb N$?

\begin {thebibliography} {99}
\bibitem{Ba} James E. Baumgartner, ``Iterated forcing," in {\bf
Surveys in Set Theory}, London Mathematical Society Lecture Note
Series, vol. 87, Cambridge University Press, Cambridge, 1983, p.
1-59
\bibitem{Bee} Michael Beeson, ``The nonderivability in intuitionistic formal
systems of theorems on the continuity of effective operations,"
{\bf Journal of Symbolic Logic}, v. 40 (1975), p. 321-346
\bibitem{Bees} Michael Beeson, {\bf Foundations of Constructive
Mathematics}, Springer, Berlin, 1985
\bibitem{BS} Michael Beeson and  Andre Scedrov,  ``Church's thesis,
continuity, and set theory," {\bf Journal of Symbolic Logic}, v.
49 (1984), p. 630-643
\bibitem{BeB1} Josef Berger and Douglas Bridges, ``A Fan-theoretic
equivalent of the antithesis of Specker's Theorem," {\bf Proc.
Koninklijke Nederlandse Akad. Wetenschappen} (Indag. Math., N.S.),
v. 18 (2007), p. 195-202
\bibitem{BeB2} Josef Berger and Douglas Bridges, ``The anti-Specker
property, a Heine–Borel property, and uniform continuity," {\bf
Archive for Mathematical Logic}, v. 46 (2008), p. 583-592
\bibitem{B} Errett Bishop, {\bf Foundations of Constructive
Mathematics}, McGraw-Hill, New York, 1967
\bibitem{BB} Errett Bishop and Douglas Bridges, {\bf Constructive
Analysis}, Springer, Berlin, 1985
\bibitem{Br09} Douglas Bridges, ``Constructive notions of
equicontinuity," {\bf Archive for Mathematical Logic}, v. 48
(2009), p. 437-448
\bibitem{Br0} Douglas Bridges, ``Uniform equicontinuity and the
antithesis of Specker's Theorem," unpublished
\bibitem{Br} Douglas Bridges, ``Inheriting the anti-Specker property",
preprint, University of Canterbury, NewZealand, 2009, submitted
for publication
\bibitem{BISV} Douglas Bridges, Hajime Ishihara, Peter Schuster,
and Luminita Vita, ``Strong continuity implies uniformly
sequential continuity," {\bf Archive for Mathematical Logic}, v.
44 (2005), p. 887-895
\bibitem{BR} Douglas Bridges and Fred Richman, {\bf Varieties of
Constructive Mathematics}, London Mathematical Society Lecture
Note Series, vol. 97, Cambridge University Press, Cambridge, 1987
\bibitem{G1} Robin J. Grayson, ``Heyting-valued models for intuitionistic
set theory," in {\bf Applications of Sheaves}, Lecture Notes in
Mathematics, vol. 753 (eds. Fourman, Mulvey, Scott), Springer,
Berlin Heidelberg New York, 1979, p. 402-414
\bibitem{G2} Robin J. Grayson, ``Heyting-valued semantics," in {\bf Logic
Colloquium '82}, Studies in Logic and the Foundations of
Mathematics, vol. 112 (eds. Lolli, Longo, Marcja) , North-Holland,
Amsterdam New York Oxford, 1984, p. 181-208
\bibitem{I91} Hajime Ishihara, ``Continuity and nondiscontinuity
in constructive mathematics," {\bf Journal of Symbolic Logic}, v.
56 (1991), p. 1349-1354
\bibitem{I92} Hajime Ishihara, ``Continuity properties in
constructive mathematics," {\bf Journal of Symbolic Logic}, v. 57
(1992), p. 557-565
\bibitem{I01} Hajime Ishihara, ``Sequential continuity in
constructive mathematics," in {\bf Combinatorics, Computability,
and Logic} (eds. Calude, Dinneen, and Sburlan), Springer, London,
2001, p. 5-12
\bibitem{IS} Hajime Ishihara and Peter Schuster, ``A Continuity
principle, a version of Baire's Theorem and a boundedness
principle," {\bf Journal of Symbolic Logic}, v. 73 (2008), p.
1354-1360
\bibitem{IY} Hajime Ishihara and Satoru Yoshida, ``A Constructive
look at the completeness of {\it D}({\bf R})," {\bf Journal of
Symbolic Logic}, v. 67 (2002), p. 1511-1519
\bibitem{KST} Georg Kreisel, Daniel Lacombe, and Joseph Shoenfield,
``Partial recursive functions and effective operations," in {\bf
Constructivity in Mathematics} (ed. Arend Heyting), North-Holland,
1959, p. 195-207
\bibitem{L} Peter Lietz, ``From Constructive Mathematics to
Computable Analysis via the Realizability Interpretation," Ph.D.
thesis, Technische Universit\"at Darmstadt, 2004,
http://www.mathematik.tu-darmstadt.de/~streicher/THESES/lietz.pdf.gz
\bibitem{Lu} Robert Lubarsky, ``Topological forcing semantics with
settling," in {\bf Proceedings of LFCS '09, Lecture Notes in
Computer Science No.5407} (eds. Sergei N. Artemov and Anil
Nerode), Springer, 2009, p. 309-322; also {\bf Annals of Pure and
Applied Logic}, to appear, doi: 10.1016/j.apal.2011.09.014
\bibitem {RSL10} Robert Lubarsky, ``Geometric Spaces with No
Points," {\bf Journal of Logic and Analysis}, v. 2 No. 6 (2010),
p. 1-10, http://logicandanalysis.org/, doi: 10.4115/jla2010.2.6
\bibitem{LD} Robert Lubarsky and Hannes Diener, ``Principles
Weaker than BD-$\mathbb N$," submitted for publication
\bibitem{LR} Robert Lubarsky and Michael Rathjen, ``On the constructive
Dedekind Reals," {\bf Logic and Analysis}, v. 1 (2008), pp.
131-152; also in {\bf Proceedings of LFCS '07, Lecture Notes in
Computer Science No. 4514} (eds. Sergei N. Artemov and Anil
Nerode), Springer, 2007, pp. 349-362
\bibitem{O} Jap van Oosten, ``Extensional realizability," {\bf Annals
of Pure and Applied Logic}, v. 84 (1997), p. 317-349
\bibitem{S} Ernst Specker, ``Nicht konstruktiv beweisbare S\"atze der
Analysis," {\bf Journal of Symbolic Logic}, v. 14 (1949), p.
145-158
\bibitem{T} Anne S. Troelstra, ``A Note on non-extensional
operations in connection with continuity and recursiveness," {\bf
Indagationes Mathematicae}, v. 39 (1977), p. 455-462
\bibitem{TvD} Anne S. Troelstra and Dirk van Dalen, {\bf
Constructivism in Mathematics}, vol. 1, North-Holland, Amsterdam,
1988
\bibitem{Ts} G.S. Tseitin, ``Algorithmic operators in constructive
complete metric spaces," {\bf Doklady Akademii Nauk SSSR}, v. 128
(1959), p. 49-52
\end {thebibliography}
\end{document}